\begin{document}

\newcommand{\E}{\mathbb{E}}
\newcommand{\PP}{\mathbb{P}}
\newcommand{\CP}{\mathcal{P}}
\newcommand{\CU}{\mathcal{U}}
\newcommand{\CW}{\mathcal{W}}
\newcommand{\CK}{\mathcal{K}}
\newcommand{\RR}{\mathbb{R}}
\newcommand{\LL}{\mathbb{L}}
\newcommand{\HH}{\mathbb{H}}
\newcommand{\CL}{\mathcal{L}}
\newcommand{\NN}{\mathbb{N}}
\newcommand{\CC}{\mathcal{C}}
\newcommand{\BB}{\mathcal{B}}
\newcommand{\SM}{\mathbb{S}}
\newcommand{\CM}{\mathcal{M}}
\newcommand{\II}{\mathbf{1}}
\newcommand{\hX}{\hat X}
\newcommand{\bq}{\bar q}
\newcommand{\bV}{\bar V}

\newtheorem{theorem}{Theorem}[section]
\newtheorem{lemma}[theorem]{Lemma}
\newtheorem{coro}[theorem]{Corollary}
\newtheorem{defn}[theorem]{Definition}
\newtheorem{assp}[theorem]{Assumption}
\newtheorem{expl}[theorem]{Example}
\newtheorem{prop}[theorem]{Proposition}
\newtheorem{rmk}[theorem]{Remark}

\newcommand\tq{{\scriptstyle{3\over 4 }\scriptstyle}}
\newcommand\qua{{\scriptstyle{1\over 4 }\scriptstyle}}
\newcommand\hf{{\textstyle{1\over 2 }\displaystyle}}
\newcommand\hhf{{\scriptstyle{1\over 2 }\scriptstyle}}

\newcommand{\proof}{\noindent {\it Proof}. }
\newcommand{\eproof}{\hfill $\Box$} 

\def\a{\alpha} \def\g{\gamma} \def\nn{\nonumber}
\def\e{\varepsilon} \def\z{\zeta} \def\y{\eta} \def\o{\theta}
\def\vo{\vartheta} \def\k{\kappa} \def\lbd{\lambda} \def\m{\mu} \def\n{\nu}
\def\x{\xi}  \def\r{\rho} \def\s{\sigma}
\def\p{\phi} \def\f{\varphi}   \def\w{\omega}
\def\q{\surd} \def\i{\bot} \def\h{\forall} \def\j{\emptyset}

\def\be{\beta} \def\de{\delta} \def\up{\upsilon} \def\eq{\equiv}
\def\ve{\vee} \def\we{\wedge}

\def\d{\mathrm{d}}
\def\F{{\cal F}}
\def\T{\tau} \def\G{\Gamma}  \def\D{\Delta} \def\O{\Theta} \def\L{\Lambda}
\def\X{\Xi} \def\S{\Sigma} \def\W{\Omega}
\def\M{\partial} \def\N{\nabla} \def\Ex{\exists} \def\K{\times}
\def\V{\bigvee} \def\U{\bigwedge}

\def\1{\oslash} \def\2{\oplus} \def\3{\otimes} \def\4{\ominus}
\def\5{\circ} \def\6{\odot} \def\7{\backslash} \def\8{\infty}
\def\9{\bigcap} \def\0{\bigcup} \def\+{\pm} \def\-{\mp}
\def\la{\langle} \def\ra{\rangle}

\def\tl{\tilde}
\def\trace{\hbox{\rm trace}}
\def\diag{\hbox{\rm diag}}
\def\for{\quad\hbox{for }}
\def\refer{\hangindent=0.3in\hangafter=1}

\newcommand\wD{\widehat{\D}}

\title{
\bf
Hybrid Stochastic Functional Differential Equations with Infinite Delay: Approximations and Numerics
 }

\author{
{\bf
Guozhen Li ${}^{1}$,  
Xiaoyue Li ${}^{2}$,  
Xuerong Mao${}^{3,}$\thanks{Corresponding author. E-mail:  x.mao@strath.ac.uk} ,
Guoting Song${}^{4}$   
 }
\\
${}^1$ School of Mathematics and Statistics, \\
Northeast Normal University, Changchun, Jilin 130024, China. \\
${}^2$ School of Mathematical Sciences,\\ Tiangong University, Tianjin, 300387, China.\\
${}^3$ Department of Mathematics and Statistics, \\
University of Strathclyde, Glasgow G1 1XH, U.K. \\
${}^4$ Academy of Mathematics and Systems Science, \\Chinese Academy of Sciences, Beijing, 100190, China.
}

\date{}

\maketitle

\begin{abstract}
 This paper is to investigate if the solution of
a hybrid stochastic functional differential equation (SFDE) with infinite  delay can be approximated by the solution of the corresponding hybrid SFDE with finite delay. A positive result is established for a large class of highly nonlinear hybrid SFDEs with infinite delay.
Our new theory makes it possible to numerically approximate the solution of the hybrid SFDE
with infinite delay, via the numerical solution of the corresponding hybrid SFDE
with finite delay.

\medskip \noindent
{\small\bf Key words: }  stochastic functional differential equation, infinite delay, finite delay, approximate solution, numerical solution.

\end{abstract}

\section{Introduction}

Many dynamic systems in sciences and industry do not only depend on their current state but also past states due to unavoidable time delays, while they are often subject to various system parameter uncertainties and environmental noise.  Moreover,
random switching takes place frequently  in a finite set resulting in the systems being hybrid, in which continuous dynamics and discrete events coexist and interact.
Hybrid stochastic functional differential equations (SFDEs) have been used to model such dynamic systems.
There are two categories : (A) hybrid SFDEs with finite delays; (B) hybrid SFDEs with infinite delays.  There is a huge literature on type-(A) SFDEs  (see, e.g., \cite{Du21, FLM16, HM10,MY06, Nguy20, SM18, XLMZ05})  but much less on (B).  This is certainly not because
type-(B) SFDEs  have no use in applications but due to the fact that
it is much harder to study (B) than (A).  As a matter of fact, long memory,
also called long-range dependence or persistence, is a phenomenon that occurs in many fields including ecology,
biology, econometrics, linguistics, hydrology, climate, DNA sequencing.
It is due to the demand from the research in these fields, research on
type-(B) SFDEs  has been advanced quickly   (see e.g., \cite{Liu15, LW12, WWMZ20, WW07, WYM17}).
 However, there are many open problems to be solved in order to meet the need of applications.

 One of the open problems is: how to obtain numerical solutions to type-(B) SFDEs?  By numerical solutions we means they can be computed by computers but not theoretical approximate solutions which are not computable.  In this paper we will tackle this open problem by bridging the gap
 between
  type-(B) SFDEs  and type-(A) SFDEs.  More precisely, we will establish new approximation theory  between type-(B)  and type-(A) SFDEs.
 Based on our new theory,  type-(B) SFDEs can be approximated by the corresponding type-(A) SFDEs.  Applying numerical methods to type-(A) SFDEs, we can obtain the numerical solutions to the corresponding  type-(A) SFDEs.   By this bridge,  we also yield the numerical solutions to  type-(B) SFDEs.  Our approach can be illustrated as follows:

 $$
 \begin{array}{ccccc}
 \hbox{type-(B) SFDEs} &  \approx & \hbox{type-(A) SFDEs} & \Leftarrow &\hbox{numerical methods} \\
 \end{array}
 $$

 To see our approach does not only work but is also useful, we need to make four points clear:

 \begin{itemize}

 \item There
 are existing numerical methods on type-(A) SFDEs (see, e.g., \cite{WMK11, WMS10}), though the numerical theory in this area is still developing.
 In detail, Li and Hou \cite{LH06} discussed the Euler-Maruyama (EM) method for linear hybrid stochastic delay differential equations (SDDEs). Wu and Mao \cite{WM08} established the strong mean square convergence  of the EM method for neutral SFDEs
under the linear growth condition.
Recently, numerical methods for superlinear type-(A) SFDEs have been developing quickly.  For examples, under the generalized Khasminskii-type condition in terms of Lyapunov functions,
 Li et al. \cite{LMS10} proved that the EM numerical solutions converge to the exact ones in probability in any finite interval;
 Guo et al. \cite{GMY18} and Song et al. \cite{SHGL22} obtained the strong convergence of the truncated EM numerical solutions for type-(A) SDDEs;
 Zhang et al. \cite{ZSL18} extended the truncated EM method  to type-(A) SFDEs;
Dareiotis et al.  \cite{Sa16} extended the tamed EM to type-(A) SDDEs driven by L$\acute{e}$vy noise.

\item There are a number of papers where type-(B) SFDEs have been used to model population systems  \cite{Liu15, LW12}. To illustrate their results, the Milstein and EM methods were used respectively to perform some computer simulations but there is no explanation on whether the numerical methods are applicable to their superlinear type-(B) SFDEs.  In \cite{MCFM21}, the authors used the simulations on
  type-(A) SFDEs to illustrate the results on type-(B) SFDEs but once again these is lack of theoretical support.
  We hence see there is an urgent need to rigorously establish the new approximation theory  between type-(B)  and type-(A) SFDEs.

  \item There are a couple of papers where discrete-time solutions were proposed to re-produce the
  stability of some very special type-(B) SFDEs.  For example, Asker \cite{Asker21}
 presented the EM solutions of a neutral SFDE with infinite delays under the globally Lipschitz condition and examined the stability in distribution of numerical solutions.  It is noted that
 the stability analysis is theoretical and does not need to compute the EM solutions numerically.
 In fact, the EM solutions there are not computable as they need infinitely-many discrete-time initial data which any computer could not cope with.  In other words, their EM solutions are in fact only approximate ones.

 \item Similar situation has happened to the stability analysis of numerical methods for \emph{deterministic} functional differential equations with infinite delays.  Song and Baker \cite{SongBaker04} discretized  the \emph{deterministic} Volterra integro-differential equation with infinite delays  using the $\theta$ method and  proved that for a \emph{small bounded initial function} and a small step size the $\theta$ method displays the stability property of the underlying equation.  The stability analysis of numerical methods can also be found in \cite{TangBaker97, HB89}.

\end{itemize}

Our main contributions can therefore be highlighted as follows:

\begin{itemize}

  \item A novel approximation method is proposed by the truncation technique. More precisely,
  for a given  type-(B) SFDE, we define a corresponding truncated SFDE with finite time delay $k$.
  The general approximation theory is established by showing that
  the solution of the truncated SFDE approximates the solution of the given  type-(B) SFDE in the $q$th moment provided $k$ is sufficiently large.

   \item Various approximation principles, including the exponential approximation rate, are given for a number of important  type-(B) SFDEs.

  \item Numerical solutions of the truncated SFDE are shown to be close to the  solution of the given  type-(B) SFDE for sufficiently large $k$ and small numerical step size.

       \item For  the global  Lipschitz case  the convergence error  between  the EM numerical solution of type-(A) SFDE and the exact solution of type-(B) SFDE  is given.
\end{itemize}

The rest of the paper is organized as follows: Section 2 gives some necessary notions and assumptions which ensure the well-posedness of the solutions of type-(B) SFDEs. In section 3,
the corresponding truncated SFDE with finite time delay $k$ is defined for a given  type-(B) SFDE, while both asymptotic approximation Theorem \ref{T3.3} and exponential approximation Theorem \ref{T3.5} are established. Section 4 discusses a number of important type-(B) SFDEs  to which the approximation theory established in Section 3 is applicable.
 Section 5 shows that the numerical solutions of the truncated SFDE are  close to the  solution of the given  type-(B) SFDE for sufficiently large $k$ and small numerical step size.
 Two examples with computer simulations are discussed to illustrate the theory.

\section{Preliminaries}

Throughout this paper, unless otherwise specified,
we let $\RR^n$ be the $n$-dimensional Euclidean space and $\BB(\RR^n)$ denote the family of all Borel measurable sets in $\RR^n$.  Let $\RR_+= [0,\8)$ and $\RR_-=(-\8,0]$.
 If $x\in \RR^n$, then $|x|$ is its Euclidean norm.
If $A$ is
a vector or matrix, its transpose is denoted by $A^T$.
If $A$ is a matrix, we let $|A| = \sqrt{\trace(A^TA)}$ be its trace norm and $\|A\| = \max\{|Ax|: |x|=1\}$ be the operator norm. If $A$ is a symmetric matrix ($A=A^T$), denote by $\lambda_{\min}(A)$ and $\lambda_{\max}(A)$
its smallest and largest eigenvalues, respectively. By $A > 0$ and $A\ge 0$,
we mean $A$ is positive and non-negative definite, respectively.
If both $a, b$ are real numbers, then $a \we b = \min\{a, b\}$ and
$a\ve b=\max\{a,b\}$.  Let $\NN_+$ denote the set of nonnegative integers.

We let $(\W ,
{\mathcal F}, \{{\mathcal F}_t\}_{t\ge 0}, \PP)$ be a complete probability
space with a filtration  $\{{\mathcal F}_t\}_{t\ge 0}$ satisfying the
usual conditions (i.e. it is right continuous and increasing while ${\mathcal F}_0$
contains all $\PP$-null sets). For a subset $\bar\W$ of $\W$, $\II_{\bar\W}$ denotes its indicator function. Let $B(t)  = (B_1(t),\cdots,B_m(t))^T$
be an $m$-dimensional Brownian motion defined on the probability space.
Let $\theta(t)$, $t\ge 0$, be a right-continuous irreducible Markov chain on the probability space
taking values in a finite state space $\SM=\{1, 2, \cdots, N\}$ with generator
$\G=(\g_{ij})_{N\K N}$ given by
$$
\PP\{\theta(t+\D)=j | \theta(t)=i\}=
\begin{cases}
 \g_{ij}\D + o(\D) & \hbox{if }
i\not= j, \\
        1+\g_{ii}\D + o(\D) & \hbox{if }  i=j,
        \end{cases}
$$
where $\D>0$.  Here $\g_{ij} \ge 0$ is the transition rate from $i$ to $j$ if
$i\not=j$ while
$\g_{ii} = -\sum_{j\not= i} \g_{ij}$.
 We assume that the Markov chain $\theta(\cdot)$ is independent of the Brownian motion
$B(\cdot)$.

Denote by $C(\RR_-; \RR^n)$ the family of continuous functions
$\f: \RR_- \to \RR^n$.  Other families e.g. $C(\RR^n\K\RR; \RR_+)$ can be defined obviously.
Fix a positive number $r$ and define the phase space $\CC_r$ with the fading memory  by
\begin{equation}\label{Cr}
 \CC_r=\left\{ \f \in C(\RR_-;\RR^n): \sup_{ -\infty <u \leq 0}e^{r u}|\f(u)| < \infty \right\}
\end{equation}
with its norm $\|\f\|_r = \sup_{-\infty < u \le 0} e^{r u}|\f(u)|$.  It is well known that $\CC_r$ under the norm
$\|\cdot\|_r$ forms a Banach space (see \cite{HNM91}  for more details on this phase space).

Moreover,  denote by $\CP_0$ the family of  probability measures $\mu$ on
$\RR_-$, while for  each $b >0$, define
$$
\CP_b = \{ \mu\in \CP_0:  \int_{-\8}^0 e^{-b u}\mu(\d u) < \8\}.
$$
Furthermore, set $\mu^{(b)} := \int_{-\8}^0 e^{-b u} \mu(du)$
 for each $\mu\in \CP_b$. Please note that $\mu^{(b)}$ is a positive number but
 $\mu(\cdot)$ is a measure.  Clearly, $\CP_{b_1} \subset \CP_b\subset \CP_0$
 if $b_1 >b >0$.  Moreover, if $\mu\in \CP_{b_1}$, then $\mu^{(b)}$
 is a strictly increasing and continuous function of $b$ in $ [0,b_1]$.

Consider a hybrid  stochastic functional differential equation (SFDE) with infinite delay of the form
\begin{align}\label{sfde}
dx(t) = f(x_t,\theta(t),t)\d t + g(x_t,\theta(t),t)\d B(t)
\end{align}
on $t\ge 0$ with the initial data
\begin{align}\label{id}
x_0=\xi\in \CC_r \hbox{ and  } \theta(0)=i_0\in\SM.
\end{align}
Here
$f: \CC_r\K \SM\K \RR_+ \to \RR^n$
and $g: \CC_r\K \SM\K \RR_+ \to \RR^{n\K m}$ are Borel measurable and, moreover, $x(t)$ is an $\RR^n$-valued stochastic process on $t\in (-\infty, \infty)$ while
$x_t=\{x(t+u): u\in \RR_-\}$ is a $\CC_r$-valued stochastic process on $t\ge 0$.
We impose the local Lipschitz condition on the coefficients
$f$ and $g$.

\begin{assp}\label{A2.1}
For each number $h >0$, there is a positive constant $\bar K_h$ such that
\begin{align}\label{A2.1a}
 |f(\f,i,t)-f(\p,i,t)|\ve  |g(\f,i,t)-g(\p,i,t)| \le \bar K_h\|\f-\p\|_r
\end{align}
for those $\f,\p\in \CC_r$ with $\|\f\|_r\ve \|\p\|_r\le h$
and all  $(i,t)\in \SM\K \RR_+$.  Moreover,
$$
\sup_{(i,t)\in\SM\K \RR_+} ( |f(0,i,t)|\ve |g(0,i,t)| ) <\8.
$$
\end{assp}

We observe that Assumption \ref{A2.1} only guarantees the existence of
the unique maximal local solution $x(t)$ of the SFDE (\ref{sfde}) on $t\in (-\8, \s_e)$, where $\s_e$ is known as the explosion time
(see, e.g., \cite{M94,WWMZ20,WW07,WYM17}).  To have a global solution (namely, $\s_e=\8$ a.s.), we need some additional conditions.  The classical condition is the linear growth condition (see, e.g.,\cite{KN86,M91,Moh84}).
However, we will impose a much more general Khasminskii-type  condition
(see, e.g., \cite{MR05}).
For this purpose, we need a couple of new notations.
Let $C(\RR^n; \RR_+)$ denote the
family of all continuous functions from $\RR^n$ to $\RR_+$.
Denote by $C^{2,1}(\RR^n\K \SM \K \RR_+; \RR_+)$ the family of all
continuous non-negative functions $V(x,i,t)$ defined on $\RR^n\K \SM
\K \RR_+$ such that for each $i\in \SM$, they are continuously twice differentiable in
$x$ and once in $t$.  Given a function $V\in C^{2,1}(\RR^n\K \SM \K \RR_+; \RR_+)$, we
define the functional $LV: \CC_r\K\SM\K \RR_+ \to \RR$ by
\begin{align*}
& LV(\f,i,t)  = V_t(\f(0),i,t) +
V_x(\f(0),i,t) f(\f,i,t)  \\
& + \frac{1}{2} \trace\big( g(\f,i,t)^T
V_{xx}(\f(0),i,t)g(\f,i,t) \big)
+ \sum_{j=1}^N \g_{ij} V(\f(0),j,t),
\end{align*}
where
$$
  V_t(x,i,t) =   \frac{\M V(x,i,t)}{ \M t}, \quad
 V_x(x,i,t) =   \Big( \frac{\M V(x,i,t)}{\M x_1}, \cdots,
 \frac{\M V(x,i,t)}{\M x_n} \Big),
$$
$$
V_{xx}(x,i,t) =  \Big( \frac{\M^2 V(x,i,t)}{\M x_i\M x_j}\Big)_{n\K n}.
$$
Let us emphasize that $LV$ is defined on
$\CC_r\K \SM\K \RR_+ $
while $V$  on $\RR^n\K \SM \K \RR_+$.  The definition of $LV$ is purely based
on the following generalised It\^o formula (see, e.g., \cite[Theorem 1.45 on p.48]{MY06})
\begin{equation} \label{ItoF}
\d V(x(t),\theta(t),t) = LV(x_t,\theta(t),t)\d t + \d M(t),
\end{equation}
where $M(t)$ is a local martingale with $M(0)=0$ (whose form is of no use in
this paper).   Moreover, for each positive number $b$, define
\begin{align*}
  \CW_b= \bigg\{W \in C(\RR^n\K\RR; \RR_+) : \sup_{(x,u)\in \RR^n\K \RR_-} \frac{W(x,u)}{1+|x|^b} < \8
    \bigg\}.
\end{align*}
Please note that although $W$ is defined on $\RR^n\K\RR$, we only need $W(x,u)/(1+|x|^b)$ to be bounded on $(x,u)\in \RR^n\K \RR_-$.
It is also easy to see that if $W\in \CW_b$, then
\begin{align} \label{Wb}
\sup_{-\8 < u\le 0} e^{rbu} W(\f(u),u) < \8, \ \ \ \forall \f\in \CC_r.
\end{align}
Moreover, denote by $\CK_\8$ the family of non-decreasing functions $\beta: \RR_+\to\RR_+$ such that
$\beta(v) \to \8$ as $v\to\8$.   We can now form the generalised Khasminskii-type condition.

\begin{assp} \label{A2.2}  There are positive constants $b_1, b_2, K$, functions
$V\in C^{2,1}(\RR^n\K \SM \K \RR_+; \RR_+)$, $W_1\in \CW_{b_1}$, $W_2\in \CW_{b_2}$,
$\beta \in \CK_\8$,
and probability measures $\mu_1\in \CP_{rb_1}$,  $\mu_2 \in \CP_{rb_2}$,
such that
\begin{align} \label{A2.2c}
 \beta(|x|) \le  \inf_{0\le t < \8} W_1(x,t),  \ \ \forall x\in \RR^n,
\end{align}
\begin{align} \label{A2.2a}
W_1(x,t) \le V(x,i,t),\ \  \forall (x,i,t)\in \RR^n\K\SM\K\RR_+,
\end{align}
and
\begin{align} \label{A2.2b}
LV(\f,i,t)&  \le K + K \int_{-\8}^0 W_1(\f(u),t+u)\mu_1(\d u) \nonumber \\
& -W_2(\f(0),t) + \int_{-\8}^0 W_2(\f(u),t+u)\mu_2(\d u)
\end{align}
for all $(\f,i,t) \in\CC_r\K\SM\K\RR_+$.
\end{assp}

The following theorem forms the foundation for this paper.

\begin{theorem} \label{T2.3}
  Under Assumptions \ref{A2.1} and \ref{A2.2}, the SFDE (\ref{sfde}) with the initial data (\ref{id})  has a unique solution $x(t)$ on $t\in (-\8,\8)$, which has the property
  that
  \begin{align}\label{T2.3a}
    \sup_{0\le t\le T} \E W_1(x(t),t) \le C_T, \ \ \forall T >0,
  \end{align}
  where $C_T$ is a positive constant dependent on $T$.
\end{theorem}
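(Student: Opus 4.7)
The plan is to use Assumption \ref{A2.1} to obtain a unique maximal local solution $x(t)$ on $(-\8,\sigma_e)$, and then exploit the generalised Khasminskii condition together with the It\^o formula (\ref{ItoF}) to rule out explosion and deliver the moment bound simultaneously. Fix $T>0$ and, for each integer $h>\|\xi\|_r$, set $\tau_h=\inf\{t\ge 0:|x(t)|\ge h\}$, a stopping time which increases to $\sigma_e$ almost surely as $h\to\8$.

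First I would apply (\ref{ItoF}) to $V(x(t),\theta(t),t)$ on $[0,t\we\tau_h]$, take expectations (the local martingale is a true martingale there since $|x|\le h$), and insert the bound on $LV$ from Assumption \ref{A2.2}. Setting $\psi_h(t):=\E V(x(t\we\tau_h),\theta(t\we\tau_h),t\we\tau_h)$, this yields
\begin{align*}
\psi_h(t)\le V(\xi(0),i_0,0)+KT+J_1+J_2-J_3,
\end{align*}
where $J_i=\E\int_0^{t\we\tau_h}\!\int_{-\8}^0 W_i(x(s+u),s+u)\mu_i(\d u)\,\d s$ for $i=1,2$ and $J_3=\E\int_0^{t\we\tau_h}W_2(x(s),s)\,\d s$. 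The crucial step is to split the inner integral as $\int_{-\8}^0=\int_{-\8}^{-s}+\int_{-s}^0$. On $u\le -s$ one has $s+u\le 0$, so $x(s+u)=\xi(s+u)$ is deterministic; using $W_i\in\CW_{b_i}$, the bound $|\xi(v)|\le\|\xi\|_r e^{-rv}$ for $v\le 0$, and $\mu_i\in\CP_{rb_i}$, this ``initial-data piece'' is bounded by a constant depending only on $T$, $\xi$ and the constants in Assumption \ref{A2.2}. For the piece $u\in[-s,0]$, Fubini followed by the change of variable $v=s+u$ gives
\begin{align*}
\E\int_0^t\!\d s\int_{-s}^0 W_i(x(s+u),s+u)\mu_i(\d u)\le \mu_i^{(0)}\E\int_0^t W_i(x(v),v)\,\d v=\E\int_0^t W_i(x(v),v)\,\d v,
\end{align*}
since $\mu_i$ is a probability measure and hence $\mu_i^{(0)}=1$.

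Applied with $i=2$, this Fubini bound cancels $J_3$ exactly; applied with $i=1$, together with $W_1\le V$ from (\ref{A2.2a}) and $x(s)=x(s\we\tau_h)$ for $s\le\tau_h$, it reduces the estimate to $\psi_h(t)\le C_T+K\int_0^t\psi_h(s)\,\d s$ for a constant $C_T$ independent of $h$. Gronwall's inequality gives $\psi_h(t)\le C_T e^{KT}$ on $[0,T]$. Combined with $\beta(|x|)\le W_1(x,t)\le V(x,i,t)$ this yields $\beta(h)\PP(\tau_h\le T)\le\psi_h(T)\le C_T e^{KT}$; since $\beta\in\CK_\8$, we conclude $\PP(\tau_h\le T)\to 0$, hence $\sigma_e=\8$ a.s. Letting $h\to\8$ and applying Fatou then delivers (\ref{T2.3a}). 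The main obstacle is the careful bookkeeping of the infinite-delay integrals: the splitting into ``initial-data past'' and ``already-generated past'' succeeds precisely because the fading-memory exponent $r$ of $\CC_r$, the growth exponents $b_i$ of $W_i$, and the integrability of $\mu_i$ are calibrated via $\mu_i\in\CP_{rb_i}$, so that the deterministic tail contribution stays finite; a second delicate point is that the $W_2$ cancellation hinges on $\mu_2$ being a probability measure, with no slack left to absorb extra constants.
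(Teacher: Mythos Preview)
Your proposal is correct and follows the same route as the paper's proof: local existence via Assumption \ref{A2.1}, the It\^o formula with $V$, the splitting $\int_{-\8}^0=\int_{-\8}^{-s}+\int_{-s}^0$, the fading-memory bound on the initial-data piece via $W_i\in\CW_{b_i}$ and $\mu_i\in\CP_{rb_i}$, Fubini on the near-past piece with exact cancellation of the $W_2$-terms, and then Gronwall plus $\beta\in\CK_\8$. One small slip to fix: your displayed Fubini estimate should carry $t\we\tau_h$ rather than $t$, since $x(v)$ is not yet defined for $v>\tau_h$ and only with $t\we\tau_h$ does the $i=2$ bound coincide with $J_3$ exactly (the paper does this computation on $[0,t\we\tau_h]$ before bounding $\int_0^{t\we\tau_h}W_1(x(s),s)\,\d s$ above by $\int_0^t\E W_1(x(s\we\tau_h),s\we\tau_h)\,\d s$).
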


\noindent
{\it Proof}.  To show the unique maximal local solution  $x(t)$ on $t\in (-\8,\T_e)$ is global, we need to show $\T_e=\8$ a.s.  For each sufficiently large
number  $h \ge \|\xi\|_r$ with $\beta(h)>0$, define the stopping time
$$
\T_h = \T_e\we \inf\{t\in [0,\T_e): |x(t)|\ge h\},
$$
where throughout this paper we set $\inf\emptyset =\8$ (as usual $\emptyset$ stands for the empty set).  Obviously, $\T_h$ is increasing in $h$ and $\T_h\le \T_e$ a.s.  It is therefore sufficient if we could show $\lim_{h\to\8} \T_h=\8$ a.s.  Fix $T > 0$ arbitrarily.    By the generalised It\^o formula (see, e.g., \cite{MY06})
and Assumption \ref{A2.2}, we can easily obtain that
\begin{align}\label{T2.3b}
&  \E W_1(x(t\we\T_h), t\we\T_h)   \le K_1 +
K \E  \int_0^{t\we\T_h} \int_{-\8}^0 W_1(x(s+u), s+u) \mu_1(\d u) \d s
 \nonumber \\
&  - \E  \int_0^{t\we\T_h} W_2(x(s),s))\d s + \E \int_0^{t\we\T_h} \int_{-\8}^0 W_2(x(s+u),s+u)\mu_2(\d u) \d s
\end{align}
for $t\in [0, T]$, where $K_1 = V(\xi(0),i_0,0)+KT$. { Due to $W_1\in\mathcal{W}_{b_1}$, there exists a constant $\hat{K} >0$ such that \begin{equation}\label{w1}W_1(x, u)\leq \hat{K}  (1+|x|^{b_1}),~~~~(x,u)\in \RR^n\times \RR_{-}.\end{equation} This together with the fact $\xi \in\mathcal{C}_r$  implies
\begin{equation}\label{w2}
\sup_{ - \8  < u  \le 0 }e^{ rb_1 u}W_1(\xi(u),u)\leq \hat{K} \sup_{ - \8  < u  \le 0 }e^{ rb_1 u} (1+ |\xi(u) |^{b_1} )\leq \hat{K} (1+\|\xi\|^{b_1}_r)=:K_2. \end{equation}
}
 By the Fubini theorem and property \eqref{w1},  we derive that
 \begin{align}\label{T2.3c}
   & \int_0^{t\we\T_h} \int_{-\8}^0 W_1(x(s+u), s+u) \mu_1(\d u) \d s \nonumber\\
   =&  \int_0^{t\we\T_h}  \Big( \int_{-\8}^{-s}  W_1(x(s+u), s+u) \mu_1(\d u)
   +  \int_{-s}^{0}  W_1(x(s+u), s+u) \mu_1(\d u) \Big) \d s  \nonumber\\
   \le & \Big( \sup_{ - \8  < u  \le -s }e^{ rb_1(s+u)}W_1(\xi(s+u),s+u) \Big)
    \int_{0}^{t\we\T_h}\int_{-\8}^{-s} e^{- rb_1(s+u)}\mu_1(\d u) \d s \nonumber\\
    + & \int_{-(t\we\T_h)}^{0}\int_{-u}^{t\we\T_h}W_1(x(s+u), s+u)\d s \mu_1(\d u) \nonumber\\
     \le & \Big( \sup_{ - \8  < u  \le 0 }e^{ rb_1 u}W_1(\xi(u),u) \Big)
   \Big(  \int_{0}^{t\we\T_h}  e^{- rb_1s} \d s \Big)
    \Big( \int_{-\8}^{0} e^{- rb_1u }\mu_1(\d u) \Big)   \nonumber\\
    + & \int_{-\8}^{0} \int_{0}^{t\we\T_h}W_1(x(s), s)\d s  \mu_1(\d u) \nonumber\\
     \le  & (K_2 /rb_1) \mu_1^{(rb_1)}
     +   \int_{0}^{t\we\T_h}W_1(x(s), s)\d s.
 \end{align}
Similarly,
\begin{align}\label{T2.3d}
     \int_0^{t\we\T_h} \int_{-\8}^0 W_2(x(s+u), s+u) \mu_2(\d u) \d s
     \le   (K_3 /rb_2) \mu_2^{(rb_2)}
     +   \int_{0}^{t\we\T_h}W_2(x(s), s)\d s,
 \end{align}
where $K_3 = \sup_{ - \8  < u  \le 0 }e^{ rb_2 u}W_2(\xi(u),u)$.
Substituting (\ref{T2.3c})
and (\ref{T2.3d}) into (\ref{T2.3b}) yields
\begin{align*}
 \E W_1(x(t\we\T_h), t\we\T_h )  &  \le K_4 + K  \E  \int_0^{t\we\T_h}  W_1(x(s),s)\d s
\\
  &   \le    K_4 + K  \E  \int_0^{t}  \E W_1(x(s\we\T_h), s\we\T_h)\d s,
 \end{align*}
where $K_4= K_1+ (K_2 /rb_1) \mu_1^{(rb_1)} +(K_3 /rb_2) \mu_2^{(rb_2)} $.  The well-known Gronwall inequality shows
\begin{align}\label{T2.3e}
 \E W_1(x(t\we\T_h),  t\we\T_h ) \le C_T,  \ \forall t\in [0,T],
 \end{align}
 where $C_T= K_4 e^{K T}$.
This along with  (\ref{A2.2c}) implies that $C_T \ge  \E \be(|x(T\we\T_h)|)  \ge \beta(h) \PP(\T_h \le T)$, namely
\begin{align}\label{T2.3f}
 \PP(\T_h \le T) \le \frac{C_T } {\beta(h)}.
 \end{align}
Consequently, $\lim_{h\to\8} \PP( \T_h \le T) =0$, namely $\lim_{h\to\8} \T_h> T$ a.s.  Since $T >0$ is arbitrary, we must have $\lim_{h\to\8} \T_h =\8$ a.s.   Letting
 $h\to\8$ in (\ref{T2.3e}) we also get the required assertion (\ref{T2.3a}).
 The proof is therefore complete. $\Box$

Let us highlight that property (\ref{T2.3f}) shows that
$\sup_{0\le t\le T} |x(t)|  \le h$ with probability at least $1-C_T/\beta(h)$ for all sufficiently large $h$. In other words,
the solution remains within the compact ball $\{x\in \RR^n: |x|\le h\}$ for the time interval $[0, T]$
with a large probability for all sufficiently large $h$.  This nice property will play its
important role when we discuss the approximate solutions in the next section.

\section{Approximations by SFDEs with finite delay}

\subsection{Truncated SFDEs}

The key aim of this paper is to approximate the solution of the SFDE (\ref{sfde}) by the solution of an SFDE with finite delay.  In turn, we can
numerically approximate the solution of the SFDE with finite delay, and hence obtain the numerical
 solution of the original SFDE (\ref{sfde}) with infinite delay.

We first need to design the corresponding SFDE with finite delay.
For each positive integer $k$, define the truncation mapping
$\pi_k :\CC_r \to \CC_r$ by {
$$
\pi_k(\f)(u)=
\begin{cases}
  \f(u) & \mbox{if } u \in [-k,0], \\
  \f(-k)  & \mbox{if }   u < -k .
\end{cases}
$$}
Define the truncation functions $f_k:\CC_r \K \SM\K \RR_+ \to \RR^n$
and $g_k: \CC_r\K \SM\K \RR_+ \to \RR^{n\K m}$
by
$$
f_k(\f,i,t) = f(\pi_k(\f),i,t)
\hbox{ and }
g_k(\f,i,t) = g(\pi_k(\f),i,t)
$$
respectively.  We observe that both $f_k$ and $g_k$ depend on
the values of $\f$ on the finite time interval $[-k, 0]$
but not the values of $\f$ on $(-\8, -k)$.  In other words,
$f_k$ and $g_k$ can be regarded as functionals defined on
$C([-k,0); \RR^n)\K \SM\K \RR_+$.
Consider the corresponding truncated SFDE
\begin{align} \label{sfde2}
\d x^k(t) = f_k(x^k_t,\theta(t),t)\d t + g_k(x^k_t,\theta(t),t)\d B(t)
\end{align}
on $t\ge 0$ with the initial data { $x^k_0= \xi $ } and $\theta(0)=i_0$.
This is clearly an SFDE with finite delay.

We observe that the truncation functions $f_k$ and $g_k$
preserve Assumption \ref{A2.1} perfectly.  In fact,
for $\f,\p\in \CC_r$ with $\|\f\|_r\ve \|\p\|_r \le h$
and $(i,t)\in \SM\K \RR_+$,
\begin{align}\label{loclip}
 & |f_k(\f,i,t)-f_k(\p,i,t)|\ve  |g_k(\f,i,t)-g_k(\p,i,t)|  \nn\\
 = & |f(\pi_k(\f),i,t)-f_k(\pi_k(\p),i,t)|\ve  |g(\pi_k(\f),i,t)-g_k(\pi_k(\p),i,t)| \nn\\
 \le & \bar K_h \|\pi_k(\f)-\pi_k(\p)\|_r \le \bar K_h\|\f-\p\|_r.
\end{align}
We now show that they also preserve  Assumption \ref{A2.2}.
In fact, for $V\in C^{2,1}(\RR^n\K \SM \K \RR_+; \RR_+)$,
the generalised It\^o formula shows
$$
\d V(x^k(t),\theta(t),t) = L_kV(x^k_t,\theta(t),t)\d t + \d M_k(t),
$$
where $M_k(t)$ is a local martingale with $M_k(0)=0$ and
$L_kV: \CC_r\K \SM\K \RR_+ \to \RR$ is defined by
\begin{align*}
& L_kV(\f,i,t)  = V_t(\f(0),i,t) +
V_x(\f(0),i,t) f_k(\f,i,t)  \\
& + \frac{1}{2} \trace\big( g_k(\f,i,t)^T
V_{xx}(\f(0),i,t)g_k(\f,i,t) \big)
+ \sum_{j=1}^N \g_{ij} V(\f(0),j,t).
\end{align*}
Under Assumption \ref{A2.2}, we then derive that
\begin{align}\label{lv2}
& L_kV(\f,i,t)  = V_t(\pi_k(\f)(0),i,t) +
V_x(\pi_k(\f)(0),i,t) f(\pi_k(\f),i,t) \nn \\
& + \frac{1}{2} \trace\big( g(\pi_k(\f),i,t)^T
V_{xx}(\pi_k(\f)(0),i,t)g(\pi_k(\f),i,t) \big)
+ \sum_{j=1}^N \g_{ij} V(\pi_k(\f)(0),j,t)\nn \\
& \le K + K \int_{-\8}^0 W_1(\pi_k(\f)(u),t+u)\mu_1(\d u) \nn  \\
& -W_2(\f(0),t) + \int_{-\8}^0 W_2(\pi_k(\f)(u),t+u)\mu_2(\d u).
\end{align}
We can therefore show the following theorem in the same way as Theorem \ref{T2.3} was proved.

\begin{theorem} \label{T3.1}
  Under Assumptions \ref{A2.1} and \ref{A2.2}, for each integer $k\ge 1$ and each $0<T\leq k$, the
  truncated SFDE (\ref{sfde2}) has a unique global solution $x^k(t)$ on $t\in (-\8,T]$.  Moreover,  there is a positive constant $C_T$,
  which is the same as in Theorem \ref{T2.3}, { such that}
{
\begin{align}\label{T3.1a}
\sup_{0\le t\le T} \E W_1(x^k(t),t) \le  C_T, \ \ \forall~ 0<T\leq k,
\end{align} }
  and
  \begin{align}\label{T3.1b}
\PP(\rho_h^k \le T) \le \frac{ C_T} {\beta(h)}
 \end{align}
 for all sufficiently large number $h$,  where $\r_h^{k}= \inf\{t\in[0, \r_e^k): |x^k(t)|\ge h\}.$
 \end{theorem}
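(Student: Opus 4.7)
My plan is to follow the proof of Theorem \ref{T2.3} line by line, with the truncation mapping $\pi_k$ inserted at the appropriate places. Assumption \ref{A2.1} together with the Lipschitz estimate \eqref{loclip} guarantees a unique maximal local solution $x^k(t)$ on $t\in(-\8,\r_e^k)$, where $\r_e^k$ denotes the explosion time. Fix $0<T\le k$ and, for each $h\ge\|\xi\|_r$ with $\beta(h)>0$, define $\r_h^k=\r_e^k\we\inf\{t\in[0,\r_e^k):|x^k(t)|\ge h\}$. Applying the generalised It\^o formula \eqref{ItoF} to $V(x^k(t),\theta(t),t)$ and using the upper bound \eqref{lv2} on $L_kV$ produces an inequality of the same shape as \eqref{T2.3b}, with $x$ replaced by $x^k$ and each $W_j(x(s+u),s+u)$ replaced by $W_j(\pi_k(x^k_s)(u),s+u)$.

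The key new step is to verify that these truncated delay integrals admit the \emph{same} pointwise bounds as in the proof of Theorem \ref{T2.3}. For this I would introduce the auxiliary process $\tl x^k(t):=x^k(t)$ for $t\ge -k$ and $\tl x^k(t):=\xi(-k)$ for $t<-k$, so that $\pi_k(x^k_s)(u)=\tl x^k(s+u)$. The crucial observation is that $e^{r(s+u)}|\tl x^k(s+u)|\le\|\xi\|_r$ for every $s+u\le 0$: when $s+u\in[-k,0]$ this follows directly from the definition of $\|\cdot\|_r$, while for $s+u<-k$ one has $|\tl x^k(s+u)|=|\xi(-k)|\le e^{rk}\|\xi\|_r$ and hence $e^{r(s+u)}|\tl x^k(s+u)|\le e^{r(s+u+k)}\|\xi\|_r\le\|\xi\|_r$ because $s+u+k<0$. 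Combined with the growth bound \eqref{w1}, this yields $W_1(\tl x^k(s+u),s+u)\le K_2 e^{-rb_1(s+u)}$ for every $s+u\le 0$ (and analogously $W_2(\tl x^k(s+u),s+u)\le K_3 e^{-rb_2(s+u)}$), which is exactly the pointwise bound that drove \eqref{T2.3c}.

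With this estimate in hand, the Fubini-based splitting over $\{u\ge -s\}$ and $\{u<-s\}$ carried out in \eqref{T2.3c}--\eqref{T2.3d} transfers verbatim with $x$ replaced by $x^k$, preserving the same constants $K_1,K_2,K_3,K_4$. Gronwall's inequality then delivers \eqref{T3.1a} with the same $C_T=K_4 e^{KT}$, and \eqref{T3.1b} together with the non-explosion of $x^k$ on $[0,T]$ follow exactly as in the proof of Theorem \ref{T2.3}. The main subtlety lies in the frozen tail value $\xi(-k)$ introduced by $\pi_k$, which a priori could blow up in $k$; the hypothesis $T\le k$ is precisely what ensures the exponential weight $e^{r(s+u)}$ absorbs the potentially large factor $e^{rk}$, keeping the bounding constants identical to those of the untruncated problem.
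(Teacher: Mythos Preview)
Your overall strategy coincides with the paper's: derive the analogue of \eqref{T2.3b} from \eqref{lv2}, split each delay integral at $u=-s$, bound the past part ($u\le -s$) using that only initial data enters there, apply Fubini to the future part ($u\in[-s,0]$), and recover the identical constants $K_1,\dots,K_4$ and hence the same $C_T$. The paper executes exactly these steps.

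There is, however, one slip that needs fixing: the identity $\pi_k(x_s^k)(u)=\tl x^k(s+u)$ is \emph{false} for $s>0$. The map $\pi_k$ freezes in the variable $u$ at $u=-k$, giving $\pi_k(x_s^k)(u)=x^k(s-k)$ for all $u<-k$, whereas your auxiliary process freezes in the time variable at $-k$, giving $\tl x^k(s+u)=\xi(-k)$ only for $s+u<-k$. These thresholds differ: for instance, when $u\in(-k-s,-k)$ one has $\pi_k(x_s^k)(u)=\xi(s-k)$ but $\tl x^k(s+u)=\xi(s+u)$. Fortunately the bound you actually need survives without the auxiliary process. For $0\le s\le T\le k$ and $u\le -s$, either $u\in[-k,-s]$, in which case $\pi_k(x_s^k)(u)=\xi(s+u)$ and $e^{r(s+u)}|\xi(s+u)|\le\|\xi\|_r$; or $u<-k$, in which case $\pi_k(x_s^k)(u)=\xi(s-k)$ with $s-k\in[-k,0]$, and then $e^{r(s+u)}|\xi(s-k)|\le e^{r(s-k)}|\xi(s-k)|\le\|\xi\|_r$ since $s+u<s-k$. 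Either way $e^{r(s+u)}|\pi_k(x_s^k)(u)|\le\|\xi\|_r$, which via \eqref{w1} yields the pointwise estimate $W_1(\pi_k(x_s^k)(u),s+u)\le K_2e^{-rb_1(s+u)}$ you need. With this correction your argument goes through and matches the paper's proof.
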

\begin{proof}
By virtue of \eqref{loclip} we know that the truncated SFDE \eqref{sfde2} has a  unique solution $x^k(t)$ on $t\in(-\infty, \r_e^k)$, where $\r_{e}^{ k}$ is the explosion time. For any sufficient large number $h$ with $h\ge\|\xi\|_r$ , {$\r_h^k$ is defined as above.}
Obviously, $\r_h^{k}$ is increasing with respect to $h$ and $\r_h^{k}\le \r_e^k\ a.s.$   
 By the generalised It\^o formula 
and  \eqref{lv2}, we   obtain that
\begin{equation}\label{eq0.8}
\begin{aligned}
&\E V(x^k(t\wedge\r_h^{k}),\theta(t\wedge\r_h^{k}), t\wedge\r_h^{k})\le K_1+K\E\int_0^{t\wedge\r_h^{k}}\int_{-\infty}^0W_1(\pi_k(x^k_s)(u), s+u)\mu_1(\d u)\d s\\
&-\E\int_0^{t\wedge\r_h^{k}}W_2(x^k(s),s)\d s+\E\int_0^{t\wedge\r_h^{k}}\int_{-\infty}^0W_2(\pi_k(x^k_s)(u),s+u)\mu_2(\d u)\d s.
\end{aligned}
\end{equation}
for $t\geq 0$, where $K_1={ \E V(\xi(0), i_0, 0)}+KT$.  Recalling the definition of $\pi_k$ one observes
{ $$
\pi_k(x^k_s)(u)=\left\{
\begin{array}{ll}
x^k(s+u),                 &{-k\le u\le 0},\\
x^k(s-k) ,    &{  u <-k}  .
\end{array}
\right.
$$}
Then for any $t\in [0, T]$, $T\leq k$, we derive that
\begin{equation}\label{eq0.9+}
\begin{aligned}
&   \int_0^{t\wedge\r_h^{k}}\int_{-\infty}^0W_1(\pi_k(x^k_s)(u), s+u)\mu_1(\d u)\d s\\
 =&\int_0^{t\wedge\r_h^{k}}\int_{-\infty}^{-s}W_1(\pi_k(x^k_s)(u), s+u)\mu_1(\d u)\d s+{\int_0^{t\wedge\r_h^{k}}\int_{-s}^0W_1(x^k(s+u), s+u)\mu_1(\d u)\d s} \\
 \le& \int_0^{t }\int_{-\infty}^{-s}\Big(\sup_{0\leq s\leq t }\sup_{-\infty<u\le -s}e^{rb_1(s+u)}   W_1(\pi_k(x^k_s)(u), s+u)\Big)e^{-rb_1(s+u)}\mu_1(\d u)\d s\\
&+  \int_0^{t\wedge\r_h^{k}}\int_{- s  }^0  W_1( x^k(s+u), s+u)\mu_1(\d u)\d s,
\end{aligned}
\end{equation}
   It follows from \eqref{w1} and   $\xi \in\mathcal{C}_r$ that
 \begin{align}\label{eq1}
 &   \sup_{0\leq s\leq t }\sup_{-\infty<u\le -s}e^{rb_1(s+u)}  W_1(\pi_k(x^k_s)(u), s+u)\nn\\
&\leq  \hat{K}  \sup_{0\leq s\leq t }\sup_{-\infty<u\le -s}e^{rb_1(s+u)} (1+ |\pi_k(x^k_s)(u)|^{b_1})\nn\\
&\leq    \hat{K}  \sup_{0\leq s\leq t }\Big (1+ \sup_{ -k\le u\le -s}e^{rb_1 (s+u)}\E| x^k (s+u )|^{b_1}\Big)\nn\\
&\leq   \hat{K} +\hat{K} \sup_{-k\leq u\le 0}e^{rb_1 u}  | \xi (u )|^{b_1} \leq K_2,
\end{align}   where $K_2$ is given in \eqref{w2}. Inserting the above inequality into \eqref{eq0.9+} yields
\begin{equation}\label{eq0.9}
\begin{aligned}
& \int_0^{t\wedge\r_h^{k}}\int_{-\infty}^0W_1(\pi_k(x^k_s)(u), s+u)\mu_1(\d u)\d s\\
\le& K_2 \int_0^{t }\int_{-\infty}^{-s}e^{-rb_1(s+u)}\mu_1(\d u)\d s
+ \int_{-t\wedge\r_h^{k}}^0\int_{-u}^{t\wedge\r_h^{k}}W_1(x^k(s+u), s+u)\d s\mu_1(\d u) \\
\le& K_2 \int_0^{t }e^{-rb_1 s }\d s\int_{-\infty}^{0}e^{-rb_1u}\mu_1(\d u)
+  \int_{-\infty}^0\int_{0}^{t\wedge\r_h^{k}}W_1(x^k(s), s)\d s\mu_1(\d u) \\
\le&(K_2/rb_1)\mu_1^{(rb_1)}+ \int_0^{t\wedge\r_h^{k}}W_1(x^k(s), s)\d s.
\end{aligned}
\end{equation}
Similarly,
\begin{equation}\label{eq0.10}
 \int_0^{t\wedge\r_h^{k}}\int_{-\infty}^0W_2(\pi_k(x^k_s)(u), s+u)\mu_2(\d u)\d s\le(K_3 /rb_2)\mu_2^{rb_2}+  \int_0^{t\wedge\r_h^{k}}W_2(x^k(s),s)\d s.
\end{equation}
 Substituting \eqref{eq0.9} and \eqref{eq0.10} into \eqref{eq0.8} gives
\begin{equation}\label{eq2}
\begin{aligned}
\E V(x^k(t\wedge\r_h^{k}),\theta(t\wedge\r_h^{k}), t\wedge\r_h^{k})
&\le K_4+K\int_0^{t}\E W_1(x^k(s\wedge\r_h^{k}), s\wedge\r_h^{k})\d s.
\end{aligned}
\end{equation}
According to the Gronwall inequality and $W_1(x,t)\leq V(x, i,t)$, we have
\begin{equation}\label{eq0.11}
\E W_1(x^k(t\wedge\r_h^{k}),t\wedge\r_h^{k})\le C_{T }, \ \ \ \forall t\in [0, T],~~T\leq k,
\end{equation}
where $C_{T}=K_4e^{KT}$.  By the same way as Theorem \ref{T2.3}, we can get the desired assertions. To avoid the duplication we omit the last proof.
\end{proof}
\subsection{Asymptotic approximations}

Recall that our main aim in this paper is to show
\begin{align} \label{aim}
\lim_{k\to\8} \E|x(t) - x^k(t)|^q = 0, \ \ \forall t > 0,
\end{align}
for $q\ge2$.
It is even more desired if an error estimate can be obtained.
To guarantee the finite $q$th moment of the solution, we slightly strengthen Assumption \ref{A2.2}.

\begin{assp} \label{A3.2}   Assumption \ref{A2.2} holds with $\beta\in \CK_\8$ being defined by
$\beta(u)=u^p$ on $u\in \RR_+$ for some number $p>2$.
\end{assp}

We also need a slightly stronger local Lipschitz condition.

\begin{assp} \label{A3.3}  Let $p$ be the same as in Assumption \ref{A3.2}. Assume that there is a
probability measure  $\mu_3\in {\cal P}_b$ with $b> r$
and a positive constant $K_h$ for each $h >0$ such that
\begin{align} \label{A3.3a}
 |f(\f,i,t)-f(\p,i,t)|\ve  |g(\f,i,t)-g(\p,i,t)|
    \le K_h \int_{-\8}^0 |\f(u)-\p(u)| \mu_3(\d u)
\end{align}
for those $\f,\p\in \CC_r$ with $\|\f\|_r\ve \|\p\|_r \le h$
and all  $(i,t)\in \SM\K \RR_+$.
\end{assp}

Noting that (\ref{A3.3a}) implies
\begin{align} \label{A3.3c}
 |f(\f,i,t)-f(\p,i,t)|\ve  |g(\f,i,t)-g(\p,i,t)|
    \le K_h \mu_3^{(r)}   \|\f -\p \|_r,
\end{align}
we see  this assumption implies Assumption \ref{A2.1}.
Theorems \ref{T2.3} and \ref{T3.1} show that under Assumptions \ref{A3.2} and \ref{A3.3}, the solutions of
equations (\ref{sfde}) and (\ref{sfde2}) satisfy
{
\begin{align}\label{pth}
     \sup_{0\le t\le T} \Big( \E |x(t)|^p \ve  \E |x^k(t)|^p \Big) \le C_T,\ \ \ \forall k\ge T > 0,
  \end{align}
}
  while (\ref{T2.3f}) and (\ref{T3.1b}) become
  \begin{align}\label{stoppings}
 \PP(\T_h \le T) \le \frac{C_T } {h^p}
 \ \hbox{ and } \ \PP(\rho_h^k \le T) \le \frac{ C_T} {h^p}
 \end{align}
  respectively.  The following theorem further shows that $x^k(\cdot)$ converge to $x(\cdot)$ in $L^q$.

\begin{theorem} \label{T3.3}
  Let Assumptions \ref{A3.2} and \ref{A3.3} hold. Then, for each $q\in [2,p)$,
  the solutions of equations (\ref{sfde}) and (\ref{sfde2}) have the property
  that
  \begin{align}\label{T3.3a}
\lim_{k\to\8} \Big( \sup_{0\le t\le T} \E|x(t)-x^k(t)|^q \Big) = 0, \ \ \forall T >0.
  \end{align}
\end{theorem}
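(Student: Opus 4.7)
My plan is to combine the standard stopping-time / moment-truncation technique with Gronwall's inequality, exploiting the exponential decay of $\mu_3$ in Assumption \ref{A3.3} to control the error introduced by truncating paths. Fix $T > 0$ and $q \in [2, p)$, and for $h \ge \|\xi\|_r$ set $\s_h^k = \T_h \we \r_h^k$, where $\T_h$ comes from Theorem \ref{T2.3} and $\r_h^k$ from Theorem \ref{T3.1}. I would first decompose
\begin{align*}
\E|x(t)-x^k(t)|^q &= \E\bigl[|x(t)-x^k(t)|^q \II_{\{\s_h^k > T\}}\bigr] + \E\bigl[|x(t)-x^k(t)|^q \II_{\{\s_h^k \le T\}}\bigr],
\end{align*}
and handle the tail term via H\"older's inequality with the conjugate exponents $p/q$ and $p/(p-q)$, using the $p$th moment bound (\ref{pth}) together with (\ref{stoppings}) to get a bound of order $h^{-(p-q)}$, uniform in $k$.

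For the main term, the plan is to apply the It\^o formula (or a $q$th moment BDG estimate) to $|x(t\we\s_h^k) - x^k(t\we\s_h^k)|^q$ and use the key decomposition
\begin{align*}
f(x_s,\theta(s),s) - f_k(x^k_s,\theta(s),s) &= \bigl[f(x_s,\cdot)-f(\pi_k(x_s),\cdot)\bigr] + \bigl[f(\pi_k(x_s),\cdot)-f(\pi_k(x^k_s),\cdot)\bigr],
\end{align*}
and analogously for $g$. On $\{s\le\s_h^k\}$ the phase-space-norm estimates $\|x_s\|_r,\|x^k_s\|_r\le h$ hold (since $h\ge\|\xi\|_r$), and because $\pi_k$ is non-expansive in $\|\cdot\|_r$, Assumption \ref{A3.3} can be applied to each summand with constant $K_h$.

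The Lipschitz piece is bounded by $K_h \int_{-\8}^0 |\pi_k(x_s)(u)-\pi_k(x^k_s)(u)|\mu_3(\d u)$. For $s\le k$ the two truncated paths agree on $(-\8,-k]$ (both equal $\xi(s-k)$) and on $[-k,-s]$ (both equal $\xi(s+u)$), so the integrand is supported on $u\in[-s,0]$ and equals $|x(s+u)-x^k(s+u)|$ there. Raising to the $q$th power (Jensen, since $\mu_3$ is a probability measure), taking expectations, and applying Fubini, this term contributes $C_h\int_0^t u_k(s)\,\d s$, where $u_k(t):=\E\sup_{0\le s\le t\we\s_h^k}|x(s)-x^k(s)|^q$. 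The truncation piece is controlled via
$$|f(x_s,\cdot)-f(\pi_k(x_s),\cdot)| \le K_h\int_{-\8}^{-k}|x(s+u)-x(s-k)|\mu_3(\d u),$$
and since $|x_s(u)|\le he^{-ru}$ on $\{s\le\s_h^k\}$ and $\mu_3\in\CP_b$ with $b>r$, this integral is deterministically bounded by a constant times $he^{-(b-r)k}$. Collecting, the final estimate has the Gronwall form
$$u_k(T) \le \varepsilon_k(h) + C_h\int_0^T u_k(s)\,\d s,$$
with $\varepsilon_k(h) \to 0$ as $k\to\8$ for any fixed $h$.

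Gronwall then yields $u_k(T) \le \varepsilon_k(h)e^{C_hT}$, and the proof is closed by a two-step selection: given $\varepsilon>0$, first choose $h$ large so the tail contribution is below $\varepsilon/2$ (using $p>q$), then let $k\to\8$ with $h$ now fixed. The main obstacle is the truncation-error step: the bound $|x_s(u)|\le he^{-ru}$ permits growth like $e^{rk}$ as $u\to-\8$, and one needs the strict gap $b>r$ in Assumption \ref{A3.3} to produce the compensating factor $e^{-(b-r)k}$; a hypothesis merely of the form $\mu_3\in\CP_r$ would be insufficient to drive $\varepsilon_k(h)$ to zero.
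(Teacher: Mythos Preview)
Your proposal is correct and follows essentially the same route as the paper's proof. Both use the stopping-time splitting $\s_h^k=\T_h\we\r_h^k$, control the tail via the $p$th moment bound and (\ref{stoppings}), apply the local Lipschitz estimate (\ref{A3.3a}) together with BDG/H\"older to the stopped difference, exploit $b>r$ so that $\mu_3\in\CP_b$ kills the truncation error at rate $e^{-(b-r)k}$, and close with Gronwall. The only cosmetic differences are: (i) you use H\"older with exponents $p/q$, $p/(p-q)$ for the tail, while the paper uses Young's inequality with an auxiliary $\delta$ (both give a bound $O(h^{-(p-q)})$); (ii) you insert the intermediate term $f(\pi_k(x_s),\cdot)$, whereas the paper compares $x_s$ to $\pi_k(x^k_s)$ directly and splits the $\mu_3$-integral at $u=-s$, arriving at the same two contributions; (iii) in your truncation step the integrand is in fact the deterministic quantity $|\xi(s+u)-\xi(s-k)|$ (since $s<k$), so the bound is $2\|\xi\|_r\,\mu_3^{(b)}e^{-(b-r)k}$ independently of $h$---your $he^{-(b-r)k}$ is correct but slightly looser.
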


\noindent
{\it Proof}. Fix $T >0$ arbitrarily and let ${k \ge T}$. For each $h> \|\xi\|_r$,
let $\T_h$ and $\r_h^{k}$ be the same as defined in the proof of Theorem \ref{T2.3} (noting that we have already proved $\T_e=\8$ a.s.) and in the statement of Theorem \ref{T3.1}, respectively.  Set
$$
\s_h^{k}=\T_h \we\r_h^{k}
\quad\hbox{and} \quad
e^k(t)= x(t)-x^k(t) \ \hbox{ for } { t\in (-\8,T] }.
$$
For any $\de >0$ and $t\in [0,T]$, we can derive by
the Young inequality that
\begin{align} \label{T3.3b}
& \E|e^k(t)|^q   = \E\Big( |e^k(t)|^q \II_{\{\s_h^{k}>T\}} \Big)
+ \E\Big( |e^k(t)|^q \II_{\{\s_h^{k}\le T\} } \Big)
\nonumber \\
 \le &  \E\Big( |e^k(t)|^q \II_{\{\s_h^{k}>T\}} \Big)
+ \frac{q\de}{p} \E|e^k(t)|^p + \frac{p-q}{p\de^{q/(p-q)}} \PP(\s_h^{k}\le T).
\end{align}
By (\ref{pth}) and  (\ref{stoppings}), we have
$$
\E|e^k(t)|^p \le 2^p C_T
$$
and
$$
\PP(\s_h^{k}\le T) \le \PP(\T_h\le T)+\PP(\r_h^{k}\le T) \le \frac{2C_T}{h^p}.
$$
We hence have
\begin{align} \label{T3.3c}
\E|e^k(t)|^q \le \E\Big( |e^k(t )|^q \II_{\{\s_h^{k}>T\}} \Big)
+ \frac{2^pC_T q\de }{p} + \frac{2C_T(p-q)}{p h^p \de^{q/(p-q)}}.
\end{align}
Now, let $\e>0$ be arbitrary. Choosing $\de$ sufficiently small for
$2^p C_T q\de/p \le \e/3$ and then $h$ sufficiently large for
$ 2C_T(p-q)/( p h^p \de^{q/(p-q)} ) \le  \e/3$,
we see from (\ref{T3.3c}) that for this particularly chosen $h$,
\begin{align} \label{T3.3d}
\E|e^k(t)|^q \le \E\Big( |e^k(t )|^q \II_{\{\s_h^{k}>T\}} \Big)
+\frac{2\e}{3}.
\end{align}
If we can show that for all sufficiently large $k$,
\begin{align} \label{T3.3e}
 \sup_{0\le t\le T} \E\Big( |e^k(t )|^q \II_{\{\s_h^{k}>T\}} \Big) \le \frac{ \e}{3},
\end{align}
we then have
$$
\lim_{k\to \8} \Big( \sup_{0\le t\le T} \E|e^k(t)|^q \Big) = 0,
$$
which is the required assertion (\ref{T3.3a}).
In other words, to complete our proof, all we need to do from now on is to show (\ref{T3.3e})
for the particularly chosen $h$.

For $t\in [0,T]$, it follows from (\ref{sfde}) and (\ref{sfde2}) as well as the definition of $f_k$ and $g_k$ that
\begin{align*}
& \E \Big( \sup_{0\le v\le t} |e^k(v\we\s_h^{k})|^q \Big)  \\
  \le & 2^{q-1}  \E \Big( \sup_{0\le v\le t}  \Big| \int_0^{v\we\s_h^{k}} [ f(x_s,\theta(s),s) - f(\pi_k(x^k_s),\theta(s),s) ] \d s \Big|^q \Big)
\\
 +  & 2^{q-1} \E \Big( \sup_{0\le v\le t}  \Big| \int_0^{v\we\s_h^{k}} [ g(x_s,\theta(s),s) - g(\pi_k(x^k_s),\theta(s),s) ] \d B(s)\Big|^q \Big).
\end{align*}
By the H\"older inequality, the Burkholder-David-Gundy inequality
(see, e.g., \cite{M91,M97}) as well as Assumption \ref{A3.3}, it is not difficult to show that{
\begin{align} \label{T3.3f}
  \E \Big( \sup_{0\le v\le t} |e^k(v\we\s_h^{k})|^q \Big)
 \le &  K_5 \E  \int_0^{t\we\s_h^{k}} \Big( \int_{-\8}^0  |x_s( u) - \pi_k(x^k_s)( u)|
 \mu_3(\d u) \Big)^q\d s,
\end{align}
where $K_5= 2^{q-1}K_h^q [T^{q-1}+ (q^{q+1}/2(q-1)^{q-1})^{q/2} T^{(q-2)/2} ]$.
But, for $0 \le s \le t\we\s_h^{k}$ (which is less than $T < k$),
\begin{align}\label{T3.3f2}
   \int_{-\8}^0  { |x_s( u) - \pi_k(x^k_s)( u)|} \mu_3(\d u)
   =  \int_{-s}^0 |e^k(s+u)| \mu_3(\d u) + J ,
\end{align}
where
$$
 J  =
 \int_{-\8}^{-s} |x(s+u) -  { \pi_k(x^k_s)( u)}|
 \mu_3(\d u).
$$
Noting that $x(s+u) = \xi(s+u)$ for $u\le -s$ while {
$$
\pi_k(x^k_s) ( u) =
\begin{cases}
 \xi(s+u) & \mbox{if } -k \le u\le -s, \\
 \xi(s-k)  & \mbox{if }  u< -k.
\end{cases}
$$
We derive that 
\begin{align*}
J  & = \int_{-\infty }^{-k} |\xi(s+u)- \xi(s-k)|  \mu_3(\d u) \\
& \le  \int_{-\infty}^{-k}  \big( |\xi(s+u)| + |\xi(s-k)|  \big) \mu_3(\d u) \\
& \le  \int_{-\infty}^{-k} \big( \|\xi\|_r e^{-r(s+u)} + \|\xi\|_r e^{-r(s-k)} \big)  \mu_3(\d u) \\
 & \le  2 \|\xi\|_r \int_{-\infty}^{-k}  e^{-r(s+u)} \mu_3(\d u)\\
 & \le  2 \|\xi\|_r\int_{-\8}^{-k}  e^{-bu +(b-r)u} \mu_3(\d u) \\
&  \le  2 \|\xi\|_r  e^{-(b-r)k } \int_{-\8}^{-k}  e^{-bu } \mu_3(\d u)
 \le K_6 e^{-(b-r)k},
 \end{align*} }
 where $K_6= 2 \|\xi\|_r  \mu_3^{(b)}$.   Putting this into (\ref{T3.3f2}) we obtain
\begin{align*}
  \int_{-\8}^0 {|x_s( u)-\pi_k(x^k_s)( u)|} \mu_3(\d u)
 \le \int_{-s}^0|e^k(s+u)| \mu_3(\d u) + K_6 e^{-(b-r)k}.
 \end{align*}
Then
\begin{align}\label{T3.3g}
    \Big(\int_{-\8}^0 {|x_s( u)-\pi_k(x^k_s)( u)|} \mu_3(\d u)\Big)^q
 \le 2^{q-1}\int_{-s}^0|e^k(s+u)|^q \mu_3(\d u) + 2^{q-1}K_6^q e^{-q(b-r)k}.
\end{align}
Substituting this into (\ref{T3.3f}) yields
\begin{align} \label{T3.3h}
 & \E \Big( \sup_{0\le v\le t} |e^k(v\we\s_h^{k})|^q \Big)
  \nonumber \\
 \le & 2^{q-1} K_5K_6^q T e^{-q(b-r)k}
+  K_5 \E  \int_0^{t\we\s_h^{k}} \Big( \int_{-s}^{0} |e^k(s+u)|^q
 \mu_3(\d u) \Big) \d s.
\end{align}
But
\begin{align} \label{T3.3h2}
  \int_0^{t\we\s_h^{k}} & \Big(  \int_{-s}^0 |e^k(s+u)|^q \mu_3(\d u)
 \Big) \d s
=   \int_{-(t\we\s_h^{k})}^0 \Big( \int_{-u}^{t\we\s_h^{k}}   |e^k(s+u)|^q \d s  \Big) \mu_3(\d u)
 \nonumber \\
\le & \int_{-(t\we\s_h^{k})}^0 \Big(  \int_{0}^{t\we\s_h^{k}}
|e^k(s)|^q \d s  \Big) \mu_3(\d u)
 \le    \int_{0}^{t\we\s_h^{k}}  |e^k(s)|^q \d s.
\end{align}
It therefore follows from (\ref{T3.3h}) that
\begin{align} \label{T3.3i}
   \E \Big( & \sup_{0\le v\le t} |e^k(v\we\s_h^{k})|^q \Big)
\le    2^{q-1} K_5K_6^{q} T e^{-q(b-r)k}
+  K_5 \E  \int_0^{t\we\s_h^{k}} |e^k(s)|^q \d s
\nonumber \\
\le & 2^{q-1} K_5K_6^{q} T e^{-q(b-r)k}
+  K_5   \int_0^{t}  \E \Big( \sup_{0\le v\le s} |e^k(v\we\s_h^{k})|^q \Big) \d s.
\end{align}
An application of the Gronwall inequality yields
$$
 \E \Big( \sup_{0\le v\le T } |e^k(v\we\s_h^{k})|^q \Big)
 \le 2^{q-1} K_5K_6^{q} T e^{K_5T-q(b-r)k}.
$$
Hence
\begin{align} \label{T3.3j}
    \E \Big( \sup_{0\le v\le T } |e^k(v)|^q \II_{\{\s_h^{k}>T\} } \Big)
 \le 2^{q-1}K_5K_6^{q} T e^{K_5T-q(b-r)k}.
\end{align}
This implies (\ref{T3.3e}) of course.  The proof is therefore complete. $\Box$
}
\subsection{Approximations with exponential convergence order}

The convergence of { $x^k(\cdot)$ to $x(\cdot)$ } in Theorem \ref{T3.3} is described
in the asymptotic way. The proof itself provides us with a way to
estimate the error. Namely, for a given $\e >0$, we can
determine a positive integer $k_0=k_0(\e)$ (by choosing $\de$, $h$ first)
so that
$$
\sup_{0\le t\le T} \E|x(t)-x^k(t)|^q < \e, \ \ \forall k\ge k_0.
$$
On the other hand, we observe that (\ref{T3.3j}) gives an exponential estimate on the error up to the stopping time $\s_h^{k}\we T$.  If we can remove the stopping time there, we will have a
stronger convergence order of { $x^k(\cdot)$ to $x(\cdot)$ }. In this sub-section, we aim to establish  the exponential convergence order described by
$$
\limsup_{k\to\8} \frac{1}{k} \log\Big( \E|x(T)-x^k(T)|^q \Big) < 0, \ \ \forall T >0.
$$
We need a couple of new notations. Given a function $\bV \in C^{2,1}(\RR^n\K \SM \K \RR_+; \RR_+)$, we
define the functional $\CL\bV: \CC_r \K \CC_r \K \SM\K \RR_+ \to \RR$ by
\begin{align*}
& \CL\bV(\f,\p,i,t)  = \bV_t(\f(0)-\p(0),i,t) +
\bV_x(\f(0)-\p(0),i,t) [f(\f,i,t) - f(\p,i,t)] \\
& + \frac{1}{2} \trace\big( [g(\f,i,t) - g(\p,i,t)]^T
\bV_{xx}(\f(0)-\p(0),i,t)[g(\f,i,t) - g(\p,i,t)] \big)
\\
&  +
\sum_{j=1}^N \g_{ij} \bV(\f(0)-\p(0),j,t).
\end{align*}
Let us emphasize that $\CL\bV$ is defined on $\CC_r\K \CC_r
\K \SM\K \RR_+ $
while $\bV$  on $\RR^n\K \SM \K \RR_+$.  For $\be>0$, let ${\cal U}_{0,\be}$  denote the family of
continuous functions $U: \RR^n\K\RR^n \to \RR_+$ such that $U(x,y)=0$ whenever $x=y\in \RR^n$ while
$$
\sup_{x,y\in \RR^n} \frac{U(x,y)}{1+|x|^\be+|y|^\be} < \8.
$$
It is easy to see
\begin{align} \label{Ub}
\sup_{-\8 < u\le 0} e^{r\be u} U(\f(u),\p(u)) < \8, \ \ \ \forall \f, \p \in \CC_r.
\end{align}
For example, if $U(x,y)= |x-y|(1+|x|+|y|)$ for $(x,y)\in\RR^n\K\RR^n$, then $U \in {\cal U}_{0,2}$.

\begin{assp} \label{A3.4}
There are positive numbers $\be, \bq, \bar K, b_4, b_5$, functions $\bV\in C^{2,1}(\RR^n\K \SM\K \RR_+; \RR_+)$,
$U \in {\cal U}_{0,\be}$,  as well as two probability measures
 $\mu_4 \in \CP_{b_4}$,  $\mu_5\in \CP_{b_5}$,  such that $b_4> r\bq$, $b_5> r\be$,
 \begin{equation} \label{A3.4a}
  \bV(0,i,t) = 0  , \ \ \forall (i,t)\in \SM \K \RR_+,
\end{equation}
\begin{equation} \label{A3.4b}
|x|^{\bq} \le \bV(x,i,t)  , \ \ \forall (x,i,t)\in \RR^n\K \SM \K \RR_+,
\end{equation}
and
 \begin{align} \label{A3.4c}
&  \CL\bV(\f,\p,i,t) \le \bar K \int_{-\8}^0 |\f(u)-\p(u)|^{\bq} \mu_4(\d u)
\nonumber \\
&  -U(\f(0),\p(0))+  \int_{-\8}^0 U(\f(u),\p(u))  \mu_5(\d u)
 \end{align}
for all $(\f,\p,i,t)\in \CC_r\K \CC_r \K \SM\K \RR_+$.
\end{assp}

\begin{theorem} \label{T3.5}
  Let Assumptions \ref{A2.1}, \ref{A2.2} and  \ref{A3.4} hold.  Set $\lbd  = (b_4 - r\bq) \we { (b_5- r\be)}$. Then
  the solutions of equations (\ref{sfde}) and (\ref{sfde2}) have the properties
  that, for all $T >0$,
  \begin{align}\label{T3.5a}
  \limsup_{k\to\8} \frac{1}{k}  \log\big(  \E|x(T)-x^k(T)|^{\bq} \big) \le -\lbd,
  \end{align}
  and
   \begin{align}\label{T3.5a2}
  \limsup_{k\to\8} \frac{1}{k}  \log (|x(T)-x^k(T)|)  \le -\frac{\lbd}{\bq}  \ \ \hbox{ a.s. }
  \end{align}
\end{theorem}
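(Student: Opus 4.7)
The plan is to apply the generalised It\^o formula to $\bV(e^k(t),\theta(t),t)$ with $e^k(t)=x(t)-x^k(t)$, localized by the stopping time $\s_h^k=\T_h\we\r_h^k$ used in the proof of Theorem \ref{T3.3}. Since $e^k(0)=\xi(0)-\xi(0)=0$ and $\bV(0,i,0)=0$ by (\ref{A3.4a}), taking expectation kills the boundary term and the local-martingale contribution, leaving
\begin{align*}
\E\bV(e^k(t\we\s_h^k),\theta(t\we\s_h^k),t\we\s_h^k) = \E\int_0^{t\we\s_h^k} \CL\bV(x_s,\pi_k(x^k_s),\theta(s),s)\d s,
\end{align*}
which I would bound using (\ref{A3.4c}) with $\f=x_s$, $\p=\pi_k(x^k_s)$.

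The heart of the argument is splitting each delay integral at $u=-s$ and $u=-k$ (throughout I fix $T\le k$). On $u\in[-k,-s]$, both $x_s(u)$ and $\pi_k(x^k_s)(u)$ equal $\xi(s+u)$, so the integrand vanishes. On the tail $u<-k$, the difference $x_s(u)-\pi_k(x^k_s)(u)=\xi(s+u)-\xi(s-k)$ is controlled via the $\CC_r$-bound $|\xi(v)|\le\|\xi\|_r e^{-rv}$ by $2\|\xi\|_r e^{-r(s+u)}$, and the elementary estimate
\begin{align*}
\int_{-\8}^{-k} e^{-r\bq u}\mu_4(\d u) = \int_{-\8}^{-k} e^{-b_4 u}e^{(b_4-r\bq)u}\mu_4(\d u) \le e^{-(b_4-r\bq)k}\mu_4^{(b_4)}
\end{align*}
yields a tail of order $e^{-(b_4-r\bq)k}$. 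An analogous tail bound with rate $b_5-r\be$ holds for the $\mu_5$-integral of $U$, using $U(x,y)\le C_U(1+|x|^{\be}+|y|^{\be})$ from $U\in\CU_{0,\be}$. On the ``free'' part $u\in[-s,0]$, Fubini gives
\begin{align*}
\int_0^{t\we\s_h^k}\int_{-s}^{0} U(x(s+u),x^k(s+u))\mu_5(\d u)\d s \le \int_0^{t\we\s_h^k} U(x(s),x^k(s))\d s,
\end{align*}
which exactly cancels the $-U(\f(0),\p(0))$ term supplied by (\ref{A3.4c}); the parallel inequality for $|e^k|^{\bq}$ against $\mu_4$ contributes the only surviving Gronwall kernel.

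Combining these pieces and using $|e^k|^{\bq}\le \bV(e^k,\cdot)$ from (\ref{A3.4b}) yields
\begin{align*}
\E|e^k(t\we\s_h^k)|^{\bq} \le \bar K\int_0^t \E|e^k(s\we\s_h^k)|^{\bq}\d s + C_1 e^{-\lbd k},
\end{align*}
so Gronwall gives $\E|e^k(T\we\s_h^k)|^{\bq}\le C_2 e^{\bar K T-\lbd k}$. Letting $h\to\8$ via Fatou (valid since $\s_h^k\to\8$ a.s.\ by Theorems \ref{T2.3} and \ref{T3.1}) establishes (\ref{T3.5a}). For the almost-sure statement (\ref{T3.5a2}), a standard Chebyshev plus Borel--Cantelli argument suffices: for any $\e>0$,
\begin{align*}
\sum_{k=1}^{\8}\PP\bigl(|e^k(T)|>e^{-(\lbd/\bq-\e)k}\bigr) \le \sum_{k=1}^{\8} e^{(\lbd-\e\bq)k}\,\E|e^k(T)|^{\bq}<\8,
\end{align*}
so a.s.\ for all sufficiently large $k$ one has $(1/k)\log|e^k(T)|\le -(\lbd/\bq-\e)$, and letting $\e\downarrow 0$ finishes the proof.

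The step I expect to be the main obstacle is bookkeeping the $U$-cancellation cleanly in the presence of the localisation: the $-U(\f(0),\p(0))$ term cancels the head contribution $\int_{-s}^0 U\,\mu_5(\d u)$ only after a Fubini swap on the region $\{0\le s\le t\we\s_h^k,\ -s\le u\le 0\}$, while the tail contribution $\int_{-\8}^{-k} U\,\mu_5(\d u)$ is not controlled by $|\f(u)-\p(u)|$ at all and must be bounded separately via $U\in \CU_{0,\be}$, $\mu_5\in\CP_{b_5}$ with $b_5>r\be$, and the exponential decay of $\xi$ built into the norm $\|\cdot\|_r$. This exact cancellation, which was unavailable in the proof of Theorem \ref{T3.3}, is precisely what upgrades asymptotic convergence to an exponential rate in $k$.
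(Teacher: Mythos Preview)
Your proposal is correct and follows essentially the same route as the paper's proof: apply the generalised It\^o formula to $\bV(e^k(t),\theta(t),t)$ localized by $\s_h^k$, invoke (\ref{A3.4c}) with $\f=x_s$ and $\p=\pi_k(x^k_s)$, split the $\mu_4$- and $\mu_5$-integrals at $u=-s$ and $u=-k$, bound the tails by the exponential trick $\int_{-\8}^{-k}e^{-r\bq u}\mu_4(\d u)\le e^{-(b_4-r\bq)k}\mu_4^{(b_4)}$ (and analogously for $\mu_5$), use Fubini on the head piece to cancel the $U$-terms, apply Gronwall, let $h\to\8$, and finish with Chebyshev plus Borel--Cantelli. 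Your identification of the $U$-cancellation as the structural point that upgrades Theorem \ref{T3.3} to an exponential rate is exactly the paper's mechanism.
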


\noindent
{\it Proof}.  Fix any $T>0 $ and integer $k >T$.  Let $h > \|\xi\|_r$.   Let $e^k(t)$ and $\s_h^{k}$ be the
same as defined in the proof of Theorem \ref{T3.3}.
Obviously, $\s_h^{k}\to\8$ almost surely as $h\to\8$.
By the generalised It\^o formula as well as the definitions of $f_k, g_k, \pi_k$, it is straightforward to verify that, for $t\in [0,T]$,
$$
\E \bV(e^k(t\we\s_h^{k}), \theta(t\we\s_h^{k}), t\we\s_h^{k})
= \E \int_0^{t\we\s_h^{k}} \CL \bV(x_s,\pi_k(x^k_s),\theta(s), s) \d s.
$$
By Assumption \ref{A3.4}, we then have
\begin{align} \label{T3.5b}
    \E|e^k(t\we\s_h^{k})|^{\bq}
 &  \le \bar K \E \int_0^{t\we\s_h^{k}} \Big(
   \int_{-\8}^0 |{ x_s( u)-\pi_k(x^k_s)( u) }|^{\bq} \mu_4(\d u) \Big) \d s
   \nonumber \\
  &  -   \E \int_0^{t\we\s_h^{k}} U(x(s),  { x^k (s)}) \d s \nonumber \\
  &  + \E \int_0^{t\we\s_h^{k}} \Big(
   \int_{-\8}^0 U({ x_s( u),  \pi_k(x^k_s)( u)} ) \mu_5(\d u) \Big) \d s.
\end{align}
In the same way as (\ref{T3.3g}) was proved, we can show that
\begin{align} \label{T3.5b2}
 \int_{-\8}^0 {|x_s( u)-\pi_k(x^k_s)( u) |}^{\bq} \mu_4(\d u)
   \le \int_{-s}^0 |e^k(s+u) |^{\bq} \mu_4(\d u) + K_7 e^{-(b_4-r\bq)k}
\end{align}
and
\begin{align} \label{T3.5b3}
& \int_{-\8}^0 U({ x_s( u), \pi_k(x^k_s)( u)} ) \mu_5(\d u) \nonumber \\
   \le & \int_{-s}^0 U(  x(s+u),  x^k(s+u) ) \mu_5(\d u)+ K_8 e^{-(b_5-r\be)k},
\end{align}
where $K_7$ 
and $K_8$ are both positive constants independent of $k$.
Substituting these into (\ref{T3.5b}) and making use of
\begin{align*}
  \int_0^{t\we\s_h^{k}}  \Big(  \int_{-s}^0 |e^k(s+u)|^{\bq} \mu_4(\d u) \Big) \d s
  \le \int_{0}^{t\we\s_h^{k}}  |e^k(s)|^{\bq} \d s
\end{align*}
and
\begin{align*}
  \int_0^{t\we\s_h^{k}}  { \Big(   \int_{-s}^0 U( x(s+u),  x^k(s+u) ) \mu_5(\d u)
 \Big)} \d s
 \le  \int_{0}^{t\we\s_h^{k}}  U( x(s),  x^k(s) )   \d s
\end{align*}
(please see (\ref{T3.3h2})),  we obtain
\begin{align} \label{T3.5e}
\E|e^k(t\we\s_h^{k})|^{\bq}  \le K_9 e^{-\lbd k} + \bar K \int_{0}^t   \E|e^k(s\we\s_h^{k})|^{\bq}   \d s ,
\end{align}
where $\lbd$ has been defined in the statement of the theorem and $K_9=T(\bar K K_7+K_8)$.
An application of the Gronwall inequality implies
$$
    \E|e^k(T \we\s_h^{k})|^{\bq}  \le K_{10} e^{-\lbd k},
$$
where $K_{10}= K_9e^{\bar K T}$. Letting $h\to\8$ yields
\begin{align} \label{T3.5f}
    \E|e^k(T)|^{\bq}  \le K_{10} e^{-\lbd k},
\end{align}
which implies immediately the first required assertion (\ref{T3.5a}).

To show the second assertion (\ref{T3.5a2}), we let $\e\in (0,\lbd)$ be arbitrary.  It follows from (\ref{T3.5f}) that
$$
\PP\{ |e^k(T)|^{\bq} > e^{-(\lbd-\e)k} \}
\le \frac{ \E |e^k(T)|^{\bq} }{ e^{-(\lbd-\e)k} }
\le K_{10}  e^{-\e k}, \ \ \forall k > T.
$$
 By the well-known Borel-Cantelli lemma (see, e.g., \cite[Lemma 2.4 on page 7]{M97}), we can find a subset $\W_0\subset \W$ with $\PP(\W_0)=1$ so that
 for each $\w\in \W_0$, there is an integer $k_0(\w)$ such that
 $$
 |e^k(T,\w)|^{\bq} \le e^{-(\lbd-\e)k}, \ \ \forall k\ge k_0(\w).
 $$
 This yields
 $$
 \lim_{k\to\8} \frac{1}{k} \log(|e^k(T,\w)|) \le -\frac{(\lbd-\e)}{\bq},
 \ \ \forall \w\in\W_0.
 $$
 This further implies the another assertion (\ref{T3.5a2}) as
 $\e$ is arbitrary while $\PP(\W_0)=1$.
The proof is hence complete. $\Box$

\begin{rmk}\label{R3.6}
We observe that both Assumptions \ref{A2.1} and \ref{A2.2}
were not used explicitly in the proof of Theorem \ref{T3.5}.
But they were in fact used to guarantee both SFDEs (\ref{sfde}) and (\ref{sfde2}) have their own unique solution.  In other words,
Theorem \ref{T3.5} holds if  both Assumptions \ref{A2.1} and \ref{A2.2} are replaced by the condition that both SFDEs (\ref{sfde}) and (\ref{sfde2})
have their own unique solution.
\end{rmk}

\section{Important classes of SFDEs}

To show the power of the general approximation theory established in the previous section,
 we will study a couple of important classes of SFDEs
and their approximations in this section.

\subsection{Global Lipschitz}

We start with the class of SFDEs under the global Lipschitz condition.

\begin{assp} \label{A4.1}  There are two constants $c_1 >0$, $p >2$ and a
probability measure $\mu_6\in {\cal P}_{rp}$   such that
\begin{align} \label{A4.1a}
 |f(\f,i,t)-f(\p,i,t)|\ve  |g(\f,i,t)-g(\p,i,t)|
    \le c_1 \int_{-\8}^0 |\f(u)-\p(u)| \mu_6(\d u)
\end{align}
for all $\f,\p\in \CC_r$ and $(i,t)\in \SM\K \RR_+$.  Moreover,
\begin{align} \label{A4.1b}
 \sup_{(i,t)\in \SM\K\RR_+} \big( |f(0,i,t)|\ve  |g(0,i,t)| \big)
   < \8.
\end{align}
\end{assp}

This assumption implies Assumption \ref{A3.3} obviously.  It is also easy to verify that Assumption \ref{A3.2} is satisfied with
$V(x,i,t)=|x|^p$, $W_1(x,t) = |x|^p$, $W_2(x,t)=0$ etc.
 To verify Assumption \ref{A3.4}, we let
$\bV(x,i,t)=|x|^{\bq}$ for  $\bq\in [2,p)$.  From now on, we also let $\de_0(\cdot)$ be
 the Dirac measure at 0, which can be regarded as a probability measure
 on $\RR_-$ and it belongs to $\bigcap_ {b\ge 1}\CP_b$.
For $\f,\p\in \CC_r$ and $(i,t)\in \SM\K \RR_+$,
we can then derive that
\begin{align} \label{A4.1c}
& \CL\bV(\f,\p,i,t)  \nonumber \\
\le & \bq |\f(0)-\p(0)|^{\bq-2}
\Big( (\f(0)-\p(0))^T (f(\f,i,t)-f(\p,i,t))
+\frac{\bq-1}{2}  |g(\f,i,t)-g(\p,i,t)|^2 \Big)
\nonumber \\
\le &  0.5\bq(\bq-1) |\f(0)-\p(0)|^{\bq}
+ \bq \big( |f(\f,i,t)-f(\p,i,t)|^{\bq} \ve
 |g(\f,i,t)-g(\p,i,t)|^{\bq} \big) \nonumber \\
 \le &  0.5\bq(\bq-1)  \int_{-\8}^0 |\f(u)-\p(u)|^{\bq} \de_0(\d u)
+  \bq c_1^{\bq} \int_{-\8}^0 |\f(u)-\p(u)|^{\bq}  \mu_6(\d u)
\nonumber \\
\le & [ 0.5\bq(\bq-1) +   \bq c_1^{\bq} ] \int_{-\8}^0 |\f(u)-\p(u)|^{\bq}  ( \de_0(\d u) + \mu_6(\d u))
\nonumber \\
= &  \bq [\bq-1  + 2 c_1^{\bq} ]  \int_{-\8}^0 |\f(u)-\p(u)|^{\bq} \bar\mu_6(\d u),
\end{align}
where $\bar\mu_6(\cdot) = 0.5(\mu_6(\cdot)+\de_0(\cdot)) \in \CP_{rp}$.
This shows that Assumption \ref{A3.4} is satisfied with
$U(x)=0$ etc.  By Theorem \ref{T3.5}, we can therefore conclude that
under Assumption \ref{A4.1}, the solutions of equations (\ref{sfde}) and (\ref{sfde2}) have   properties (\ref{T3.5a}) and (\ref{T3.5a2})  with $\lbd=(p-\bq)r$    for any $\bq \in [2,p)$.

  \subsection{Khasminiskii case}

 A much wider class of SFDEs is covered by the Khasminskii condition than the global Lipschitz condition. We here propose a special Khasminskii-type condition.

  \begin{assp} \label{A4.3}  There are  constants $c_2 >0$, $\bar p > 2$ and a
probability measure $\mu_7\in {\cal P}_{r\bar p}$
such that
\begin{align} \label{A4.3a}
& (\f(0)-\p(0))^T [f(\f,i,t)-f(\p,i,t)] + \frac{\bar p-1}{2}    |g(\f,i,t)-g(\p,i,t)|^2 \nonumber \\
   & \quad \quad \le c_2 \int_{-\8}^0 |\f(u)-\p(u)|^2  \mu_7(\d u)
\end{align}
for all $\f,\p\in \CC_r$
and $(i,t)\in \SM\K \RR_+$.  Moreover, condition (\ref{A4.1b}) holds.
\end{assp}

This assumption does not in general imply the local Lipschtiz continuity of
both $f$ and $g$, we hence need to retain Assumption \ref{A2.1}.  But this assumption does imply Assumptions \ref{A2.2} and \ref{A3.4}.   Let us verify Assumption  \ref{A2.2} first.  Let $V(x,i,t) = |x|^p$
for any $p\in (2, \bar p)$. For
$(\f,i,t)\in \CC_r\K\SM\K\RR_+$, we have
$$
 LV(\f,i,t)
\le  p |\f(0)|^{p-2}
\Big( \f(0)^T f(\f,i,t)
+\frac{p-1}{2}  |g(\f,i,t)|^2 \Big).
$$
Making use of the elementary inequality
$$
|a+b|^2\le \frac{\bar p - 1}{p-1} a^2 + \frac{\bar p-1}{\bar p-p} b^2,
\ \  \forall a, b\ge 0
$$
as well as Assumption \ref{A4.3}, we can get
$$
 LV(\f,i,t) \le  p |\f(0)|^{p-2} \Big(c_3+ c_3|\f(0)| + c_2 \int_{-\8}^0 |\f(u)|^2  \mu_7(\d u) \Big),
$$
where $c_3$ and the following $c_4$, $c_5$   are all positive constants.
By the well-known Young inequality, we can further get
\begin{align} \label{A4.3b}
 LV(\f,i,t) & \le c_4\Big(1+  \int_{-\8}^0  |\f(u)|^p  \de_0(\d u) + \int_{-\8}^0  |\f(u)|^p  \mu_7(\d u) \Big)  \nonumber \\
 & \le 2c_4 \Big( 1 +   \int_{-\8}^0 |\f(u)|^p \bar\mu_7(\d u) \Big),
\end{align}
where $\bar\mu_7(\cdot) = 0.5(\de_0(\cdot)+\mu_7(\cdot)) \in \CP_{r\bar p}$.  This shows that Assumption \ref{A2.2} is satisfied with $W_1(x,t)=|x|^p$, $W_2(x,t)=0$, etc.  To verify Assumption \ref{A3.4}, we let
$\bV(x,i,t)=|x|^{\bq}$ for $\bq \in [2, \bar p)$.
For $\f,\p\in \CC_r$ and $(i,t)\in \SM\K \RR_+$,
it then follows from the first inequality in (\ref{A4.1c}) and Assumption
\ref{A4.3} that
\begin{align} \label{A4.3c}
& \CL \bV(\f,\p,i,t)  \nonumber \\
\le & \bq c_2 |\f(0)-\p(0)|^{\bq-2}  \int_{-\8}^0 |\f(u)-\p(u)|^2 \mu_7(\d u)
\nonumber \\
 \le &  c_5 \Big( \int_{-\8}^0  |\f(u)-\p(u)|^{\bq}   \de_0(\d u)
 + \int_{-\8}^0  |\f(u)-\p(u)|^{\bq}   \mu_7(\d u) \Big) \nonumber \\
  = &  2 c_5 \int_{-\8}^0  |\f(u)-\p(u)|^{\bq}   \bar\mu_7(\d u).
\end{align}
This shows that Assumption \ref{A3.4} is satisfied with $U(x)=0$
 etc.  By Theorem \ref{T3.5}, we can conclude
  under Assumptions \ref{A2.1} and \ref{A4.3}, the assertions of Theorem \ref{T3.5} hold with $\lbd=
  (\bar p-\bq)r$ for any $\bq \in [2,\bar p)$.

  \subsection{Highly nonlinear SFDEs} \label{SecHN}

  In this subsection we will consider a class of highly nonlinear SFDEs
  in the form
  \begin{align}\label{sfde3}
   \d x(t) = F(x(t),\psi_1(x_t),\theta(t),t) \d t + G(x(t),\psi_2(x_t),\theta(t),t) \d B(t) 
  \end{align}
  on $t\ge 0$ with the initial data (\ref{id}).  Here $F: \RR^n\K\RR^n\K\SM\K\RR_+\to\RR^n$ and  $G: \RR^n\K\RR^n\K\SM\K\RR_+\to\RR^{n\K m}$ are Borel measurable while
  $\psi_1,\psi_2: \CC_r\to \RR^n$ are defined by
  \begin{align}\label{psi12}
   \psi_1(\f)=\int_{-\8}^0 \f(u) \mu_8(\d u)
 \   \hbox{ and }  \
   \psi_2(\f)=\int_{-\8}^0 \f(u) \mu_9(\d u)
  \end{align}
  with $\mu_8, \mu_9\in \CP_0$ (which will be strengthened later).    Equation (\ref{sfde3})
  becomes our underlying SFDE (\ref{sfde}) if we define
$f: \CC_r\K \SM\K \RR_+ \to \RR^n$
and $g: \CC_r\K \SM\K \RR_+ \to \RR^{n\K m}$ by
\begin{align}\label{A4.6b}
f(\f,i,t)= F(\f(0),\psi_1(\f),i,t)
\ \hbox{ and } \
g(\f,i,t)= G(\f(0),\psi_2(\f),i,t)
\end{align}
respectively.  Moreover, the corresponding truncated SFDE (\ref{sfde2}) becomes
{  \begin{align}\label{sfde4}
   \d x^k (t) &= f(\pi_k(x^k_t),\theta(t),t) \d t + g(\pi_k(x^k_t),\theta(t),t) \d B(t)
   \nonumber \\
   & = F(x^k(t),\psi_1(\pi_k(x^k_t)),\theta(t),t) \d t + G(x^k(t),\psi_2(\pi_k(x^k_t)),\theta(t),t) \d B(t)
  \end{align} }
 on $t\ge 0$ with initial data $x^k_0=\pi_k(\xi)$ and $\theta(0)=i_0$.

  To apply our theory established in the previous sections,
  we impose the local Lipschitz condition on $F$ and $G$.

  \begin{assp}\label{A4.5}
For each $h >0$, there is a positive constant $\tl K_h$ such that
\begin{align}\label{A4.5a}
 |F(x,y,i,t)-F(\bar x,\bar y,i,t)|\ve  |G(x,y,i,t)-G(\bar x, \bar y,i,t)| \le \tl K_h(|x-\bar x|+|y-\bar y|)
\end{align}
for all $x, y, \bar x, \bar y \in \RR^n$ with $|x|\ve |y|\ve |\bar x|\ve |\bar y| \le h$
and $(i,t)\in \SM\K \RR_+$.
\end{assp}

We also impose a generalised Khasminskii-type condition (see, e.g., \cite{MR05}).

\begin{assp}\label{A4.6}
There are nonnegative constants $c_6$ - $c_9$, $p$, $\bar p$  such that
$c_7\ge c_8+c_9$, $p> 2$, $\bar p >0$ and
\begin{align}\label{A4.6a}
&  x^T F(x,y,i,t)+\frac{p-1}{2} |G(x,z,i,t)|^2 \nonumber\\
 \le &  c_6(1+|x|^2+|y|^2+|z|^2) - c_7|x|^{2+\bar p} +c_8|y|^{2+\bar p} + c_9|z|^{2+\bar p}
\end{align}
for all $(x,y,z,i,t)\in \RR^n\K\RR^n\K\RR^n\K\SM\K\RR_+$. Moreover, $\mu_8, \mu_9\in \CP_{r(p+\bar p)}$.
\end{assp}

We first verify Assumption  \ref{A3.2} under Assumption \ref{A4.6}.  Let $V(x,i,t)=|x|^p$.  Then
$$
 LV(\f,i,t)
\le  p |\f(0)|^{p-2}
\Big( \f(0)^T F(\f(0),\psi_1(\f),i,t)
+\frac{p-1}{2}  |G(\f(0),\psi_2(\f),i,t)|^2 \Big).
$$
for $(\f,i,t)\in \CC_r\K\SM\K\RR_+$.  By Assumption \ref{A4.6},
\begin{align*}
 LV(\f,i,t)
\le &  p |\f(0)|^{p-2}
\Big( c_6(1+|\f(0)|^2+|\psi_1(\f)|^2 + |\psi_2(\f)|^2 )  \\
 & \quad - c_7|\f(0)|^{2+\bar p} +c_8|\psi_1(\f)|^{2+ \bar p} + c_9|\psi_2(\f)|^{2+ \bar p}   \Big).
\end{align*}
By the Young inequality, we can then easily show that
\begin{align*}
 LV(\f,i,t)
\le &
2p c_6(1+|\f(0)|^p+|\psi_1(\f)|^p +|\psi_2(\f)|^p )  \\
  - & \bar c_7|\f(0)|^{p+\bar p} +\bar c_8|\psi_1(\f)|^{p+\bar p} + \bar c_9|\psi_2(\f)|^{p+\bar p},
\end{align*}
where {$\bar c_7=pc_7-p(c_8+c_9)(p-2)/(p+\bar p)$},
$\bar c_8 = pc_8(2+\bar p)/(p+\bar p)$ and
$\bar c_9 = pc_9(2+\bar p)/(p+\bar p)$ so $\bar c_7 \ge \bar c_8+\bar c_9$. Using $|\psi_1(\f)|^p\le \int_{-\infty}^0 |\f(u)|^p\mu_8(\d u)$, we further get
\begin{align*}
 LV(\f,i,t)
\le &
{4p c_6} \Big( 1+ \int_{-\8}^0 |\f(u)|^p  \de_0(\d u) +\int_{-\8}^0 |\f(u)|^p  \mu_8(\d u)+\int_{-\8}^0 |\f(u)|^p  \mu_9(\d u) \Big)  \\
  - &   \bar c_7|\f(0)|^{p+\bar p} +\bar c_8 \int_{-\8}^0 |\f(u)|^{p+\bar p} \mu_8(\d u)
    + \bar c_9\int_{-\8}^0 |\f(u)|^{p+\bar p} \mu_9(\d u)  \\
   \le &
{4p c_6}\Big( 1+ 3 \int_{-\8}^0 |\f(u)|^p  \mu_{10}(\d u) \Big)  \\
  - &   \bar c_7|\f(0)|^{p+\bar p} +  \bar c_7  \int_{-\8}^0
  |\f(u)|^{p+\bar p}
  \mu_{11}(\d u),
\end{align*}
where $\mu_{10}(\cdot) = [\de_0(\cdot)+\mu_8(\cdot)+\mu_9(\cdot)]/3\in \CP_{r(p+\bar p)}$ and $\mu_{11}(\cdot) = [\bar c_8 \mu_8(\cdot)+\bar c_9 \mu_9(\cdot)]/(\bar c_8 +\bar c_9) \in \CP_{r(p+\bar p)}$.
We therefore see that Assumption \ref{A3.2} is satisfied with
$W_1(x,t)=|x|^p$, $W_2(x,t)=\bar c_7|x|^{p+\bar p}$ etc.

Let us now verify Assumption  \ref{A3.3}.
Let $h>0$ and set $\bar h= h(\mu_8^{(r)}\ve \mu_9^{(r)})$.  For $\f,\p\in \CC_r$ with { $\|\f\|_r\ve \|\p\|_r \le h$}
and $(i,t)\in \SM\K \RR_+$, we derive from Assumption \ref{A4.5} that
\begin{align*}
 & |F(\f(0),\psi_1(\f),i,t)-F(\p(0),\psi_1(\p),i,t)|\ve
 |G(\f(0),\psi_2(\f),i,t)-G(\p(0),\psi_2(\p),i,t)| \\
    \le & \tl K_h \big( \int_{-\8}^0 |\f(u)-\p(u)| \de_0(\d u) + \int_{-\infty}^0 |\f(u)-\p(u)| \mu_8(\d u)+ \int_{-\infty}^0 |\f(u)-\p(u)| \mu_9(\d u) \big) \\
    =& 3 \tl K_ h  \int_{-\8}^0 |\f(u)-\p(u)| \mu_{10}(\d u),
\end{align*}
where $\mu_{10}$ has been defined above.
 We can therefore conclude from Theorem \ref{T3.3} that under Assumptions \ref{A4.5} and \ref{A4.6},
  the solutions of equations (\ref{sfde3}) and (\ref{sfde4}) have   property (\ref{T3.3a})
for each $q\in [2,p)$.

\section{Numerical methods}

The theory established in the previous sections enables us to obtain
numerical approximate solutions to SFDEs with infinite delay.
More precisely, in order to numerically approximate the solution of the SFDE (\ref{sfde}), we can now  obtain the numerical solution of the
corresponding truncated SFDE (\ref{sfde2}) for a sufficiently large $k$.

As pointed out before, the  truncated SFDE (\ref{sfde2}) is an SFDE with finite delay.  Although numerical methods for the SFDEs with finite delay have been studied by many authors
(see, e.g., \cite{GMY18,WM08,WMK11,WMS10,ZSL18}), the existing results can not be applied directly to the SFDE (\ref{sfde2}) due to its special truncated feature.
Fortunately,  numerical methods can be modified to obtain the numerical solutions of
the truncated SFDE (\ref{sfde2}).  To demonstrate the idea, we will concentrate on obtaining   the numerical solutions of the truncated  SFDE (\ref{sfde4}) and hence the numerical  solutions of the SFDE (\ref{sfde3}).

{
\subsection{Lipschitz case}\label{SecRate}

How the existing numerical analysis can be modified to obtain the numerical solutions of the truncated SFDE (\ref{sfde4}) is best illustrated in the globally Lipschitz case.

\begin{assp}\label{A5.4}
There exists a constant $L_1>0$ such that
$$
|F(x, y, i, t)-F(\bar x, \bar y, i, t)|\vee|G(x, y, i, t)-G(\bar x, \bar y, i, t)| \le L_1(|x-\bar x|+ |y-\bar y|)
$$
for all $x, y, \bar x, \bar y\in \RR^n$, $t\in \RR_+$ and $i\in\mathbb{S}$. Moreover,
$$
\sup_{(i,t)\in\SM\K \RR_+} ( |F(0,0, i,t)|\ve |G(0,0, i,t)| ) <\8.
$$
\end{assp}

This assumption of course guarantees that  the truncated SFDE \eqref{sfde4} has a unique global  solution $x^k(t)$.
Let us now fix a sufficiently large $k$ and apply the Euler-Maruyama (EM) method (see, e.g., \cite{WM08}) to
 the truncated SFDE (\ref{sfde4}) to obtain its numerical solutions. Let
 $k_1$ be a positive integer and set the step size $\D = 1/k_1$. Let
 $t_j=j\D$ for $j=-kk_1, -(kk_1-1), \cdots, -1, 0, 1, \cdots$.  We first need to form discrete-time numerical approximations $X^k_\D(t_j)\approx
 x^k(t_j)$ for $j\ge 0$ given the initial data $x^k_0=\pi_k(\xi)$ and $\theta(0)=i_0$.
 Recall that $\pi_k(\xi)$ depends only on the values
 $\xi(u)$ for $u\in [-k, 0]$. Accordingly, we set $X^k_\D(t_j)=\xi(t_j)$
 for $j= -kk_1, \cdots, -1, 0$ and form $X^k_\D(t_{j+1})$ for $j\ge 0$ by
  \begin{align}\label{DEM}
 X^k_\D(t_{j+1}) =
 X^k_\D(t_j) + F( X^k_\D(t_j), \psi_{1j}, \theta(t_j),t_j) \D
 + G( X^k_\D(t_j), \psi_{2j}, \theta(t_j),t_j)\D B_j,
 \end{align}
where $\D B_j=B(t_{j+1})-B(t_j)$ while
$$
\psi_{1j}
=\sum_{h=-kk_1}^{-1} X^k_\D(t_{j+h})
\mu_8(t_{h}, t_{h+1}) +
 X^k_\D(t_{j-kk_1}) \mu_8(-\infty, -k)
$$
and $\psi_{2j}$ is defined as $\psi_{1j}$  by replacing $\mu_8$ with
$\mu_9$.   Note that
 $\{\theta(t_j)\}_{j\ge 0}$ is a discrete-time Markov chain starting from $\theta(0)=r_0$
 with the one-step transition probability matrix $e^{\D\G}$. The numerical simulation of $\{\theta(t_j)\}_{j\ge 0}$ can be performed in the way as described
 in \cite[p.112]{MY06}.  We next form the continuous-time numerical solution
 \begin{align}\label{CEM1}
 X^k_\D(t) = \sum_{j=0}^\8 X^k_\D(t_j) \II_{[t_j, t_{j+1})}(t), \quad t\ge 0.
 \end{align}
 Although this is the numerical solution we usually compute in practice,
 the numerical analysis is carried out via the
  continuous auxiliary  process defined by
\begin{equation}\label{CEM2}
\bar X^k_{\D}(t):=\xi(0) + \int_0^t F(X^k_{\D}(s), \psi_{1}(s), \bar\o(s), \T(s))\d s + \int_0^t G(X^k_{\D}(s), \psi_{2}(s), \bar\o(s), \T(s))\d B(s),
\end{equation}
for $t\ge 0$ while set $\bar X^k_\D(t)= \xi(t)$ for $t\in [-k, 0]$,  where
$$
\T(t):=t_j,\ \ \  \bar\o(t):=\o(t_j),\ \ \ \psi_{1}(t):= \psi_{1j},\ \ \ \psi_{2}(t):= \psi_{2j},~~~~~t\in[t_j, t_{j+1}).
$$
Note that $\bar X^k_{\D}(t_j)=X^k_{\D}(t_j)$ for all $j$.  That is, $\bar X^k_{\D}(t)$ coincides with the numerical  solution $X^k_{\D}(t)$ at the grid-points.
We need two more assumptions.

\begin{assp}\label{A5.5}
There exist constants $\a\in[1/2, 1]$  and $L_2>0$ such that
$$
|F(x, y, i, t_1)-F(x, y, i, t_2)|\vee|G(x, y, i, t_1)-G(x, y, i, t_2)|\le L_2(1+ |x|+ |y|)|t_1-t_2|^\a
$$
for all $x, y\in \RR^n$, $t_1, t_2 \in \RR_+$ and $i\in\mathbb{S}$.
\end{assp}

\begin{assp}\label{A5.6}
There exist constants $\beta\ge 1/2$ and $L_3>0$  such that the initial function $\xi$ satisfies
$$
  |\xi(s_1)-\xi(s_2)| \le L_3|s_1-s_2|^\beta,\ \ \ \forall s_1, s_2\in (-\8, 0].
$$
\end{assp}

In the remaining of this section we will fix $p\ge 2$, $T >0$ and $k > T$ arbitrarily and let
$C$ stand for a universal positive constant dependent on $p, T,\xi$ etc. but independent of $\D$ and $k$.  Let us present a number of useful lemmas.

\begin{lemma}\label{TH5.8}
Suppose that Assumption \ref{A5.4} holds and $\mu_8, \mu_9\in\CP_r$ (please recall (\ref{psi12}) regarding $\mu_8$ and $\mu_9$).  Then
\begin{equation}\label{EMB}
  \sup_{0<\D\le 1} \mathbb{E}\big(\sup_{t\in[0, T]}|\bar X^k_{\D}(t )|^{p}\big)\le C .
\end{equation}
\end{lemma}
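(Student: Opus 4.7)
The plan is to derive a Grönwall-type integral inequality for $\E\sup_{s\le u}|\bar X^k_\D(s)|^p$ from the representation \eqref{CEM2}. First I would apply $|a+b+c|^p \le 3^{p-1}(|a|^p+|b|^p+|c|^p)$ to split $|\bar X^k_\D(t)|^p$ into the contributions from $\xi(0)$, the drift integral, and the stochastic integral. After taking $\sup_{0\le t\le u}$ and expectation for $u\in[0,T]$, I would estimate the drift by Hölder's inequality in time, $|\int_0^t F\,ds|^p \le T^{p-1}\int_0^t |F|^p\,ds$, and the diffusion by the Burkholder--Davis--Gundy inequality followed by Hölder, $\E\sup_{0\le t\le u}|\int_0^t G\,dB|^p \le C_p T^{p/2-1}\E\int_0^u |G|^p\,ds$. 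Assumption \ref{A5.4} together with the boundedness of $F(0,0,i,t)$ and $G(0,0,i,t)$ yields $|F(x,y,i,t)|\vee|G(x,y,i,t)| \le C(1+|x|+|y|)$, which reduces the task to controlling $\E\int_0^u\bigl(|X^k_\D(s)|^p+|\psi_1(s)|^p+|\psi_2(s)|^p\bigr)ds$ by the supremum of $|\bar X^k_\D|^p$.

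The crucial and most delicate step is the uniform-in-$k$-and-$\D$ bound on $|\psi_{1j}|$ (and analogously $|\psi_{2j}|$). I would avoid a direct Jensen application on $|\psi_{1j}|^p$, which would force $\mu_8\in\CP_{rp}$; instead I would exploit the $\CC_r$-norm structure of the segment $X^k_{\D,t_j}$. Using $|X^k_\D(t_j+u)|\le e^{-ru}\|X^k_{\D,t_j}\|_r$ for $u\le 0$, together with the fact that for $\D\le 1$
$$\sum_{h=-kk_1}^{-1} e^{-rt_h}\mu_8(t_h,t_{h+1}) \le e^{r}\int_{-k}^{0} e^{-ru}\mu_8(du) \le e^{r}\mu_8^{(r)},$$
and bounding the tail via $|X^k_\D(t_{j-kk_1})| = |\xi(-k)| \le \|\xi\|_r e^{rk}$ and $\mu_8(-\infty,-k)\le e^{-rk}\mu_8^{(r)}$, I obtain
$$|\psi_{1j}| \le e^{r}\mu_8^{(r)}\|X^k_{\D,t_j}\|_r + \mu_8^{(r)}\|\xi\|_r.$$
A further split of the segment norm, $\|X^k_{\D,t_j}\|_r \le \|\xi\|_r + \sup_{0\le v\le t_j}|\bar X^k_\D(v)|$ (since on the past segment $|\xi(v)|e^{rv}\le \|\xi\|_r$), then gives $|\psi_{1j}|^p \le C + C\sup_{0\le v\le t_j}|\bar X^k_\D(v)|^p$, with an analogous estimate for $\psi_{2j}$.

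Combining these bounds with the observation that $|X^k_\D(s)|^p \le \sup_{0\le v\le s}|\bar X^k_\D(v)|^p$ (the step-function agrees with $\bar X^k_\D$ at grid points) produces the inequality
$$\E\sup_{0\le s\le u}|\bar X^k_\D(s)|^p \le C + C\int_0^u \E\sup_{0\le s'\le s}|\bar X^k_\D(s')|^p\,ds, \quad u\in[0,T],$$
with $C$ independent of $\D\in(0,1]$ and $k>T$. The Grönwall inequality then yields the required bound $\sup_{0<\D\le 1}\E\sup_{0\le t\le T}|\bar X^k_\D(t)|^p \le C$. The main obstacle is the memory-term estimate described above: a careless application of Jensen would require $\mu_8,\mu_9\in\CP_{rp}$, and exploiting the $\CC_r$-norm structure of the truncated initial segment $\pi_k(\xi)$ together with $\mu_8,\mu_9\in\CP_r$ is what makes the bound both sharp and uniform in $k$.
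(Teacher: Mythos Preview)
Your proposal is correct and follows essentially the same route as the paper: derive a Gr\"onwall inequality for $\E\sup_{0\le s\le t}|\bar X^k_\D(s)|^p$ via H\"older/BDG, then control $|\psi_{1j}|$ by weighting each summand with $e^{rt_h}e^{-rt_h}$ and using $\mu_8\in\CP_r$ to bound $\sum e^{-rt_h}\mu_8(t_h,t_{h+1})+e^{rk}\mu_8(-\infty,-k)\le e^{r}\mu_8^{(r)}$, after which the segment is split into the initial part (bounded by $\|\xi\|_r$) and the running supremum of $|\bar X^k_\D|$. One small slip: $X^k_\D(t_{j-kk_1})=\xi(t_j-k)$, not $\xi(-k)$ in general, but your subsequent bound $|\xi(t_j-k)|\le \|\xi\|_r e^{r(k-t_j)}\le \|\xi\|_r e^{rk}$ is unaffected.
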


\begin{proof} Fix $\D \in (0,1]$ arbitrarily. It is standard (see, e.g., \cite{M97,MY06}) to show from (\ref{CEM2})
along with Assumption \ref{A5.4} that for $t\in [0,T]$,
\begin{align} \label{M01}
\E\big(\sup_{0\le u\le t} |\bar X^k_{\D}(u)|^{p}\big)
\le C +
C \E\int_0^t ( |X^k_{\D}(s)|^{p} + |\psi_{1}(s)|^{p} + |\psi_{2}(s)|^{p} ) \d s.
\end{align}
For each $s\in [0,T]$, there is a unique $j$ such that $s\in [t_j,t_{j+1})$ and
$\psi_{1}(s)=\psi_{1j}$.   By the definition of $\psi_{1j}$ and $\mu_8 \in\CP_r$, we further derive
\begin{eqnarray}\label{M02}
  |\psi_{1j}| &=& |\sum_{h=-kk_1}^{-1}X^k_\D(t_{j+h})\mu_8(t_h, t_{h+1})+ X^k_\D(t_{j-kk_1})\mu_8(-\infty, -k)| \nonumber\\
  &\le& \sum_{h=-kk_1}^{-1}|X^k_\D(t_{j+h})|\mu_8(t_h, t_{h+1})+ |X^k_\D(t_{j-kk_1})|\mu_8(-\infty, -k) \nonumber \\
  &\le& \sum_{h=-kk_1}^{-1}e^{rt_h}|X^k_\D(t_{j+h})|e^{-rt_h}\mu_8(t_h, t_{h+1})+ e^{rt_{-kk_1}}|X^k_\D(t_{j-kk_1})|e^{-rt_{-kk_1}}\mu_8(-\infty, -k) \nonumber \\
  &\le& \Big(\sup_{h\le0}e^{rt_h}|X^k_\D(t_{j+h})|\Big)\Big(
  \sum_{h=-kk_1}^{-1}e^{-rt_h}\mu_8(t_h, t_{h+1})+e^{-rt_{-kk_1}}\mu_8(-\infty, -k)\Big) \nonumber\\
  &\le&  \Big(\sup_{h\le0}e^{rt_h}|X^k_\D(t_{j+h})|\Big) \int_{-\8}^0 e^{r\D -ru} \mu_8(\d u)
  \nonumber \\
   &\le&  \Big(\sup_{h\le0}e^{rt_h}|X^k_\D(t_{j+h})|\Big)   e^{r}  \mu_8^{(r)} .
\end{eqnarray}
Note that
\begin{eqnarray*}
  &&\sup_{h\le0}e^{rt_h}|X^k_\D(t_{j+h})| 
  =e^{-rt_j}\sup_{h\le j}e^{rt_h}|X^k_\D(t_h)| \\
  &\le& e^{-rt_j}\Big(\sup_{h\le 0}e^{rt_h}|X^k_\D(t_h)|+\sup_{0\le h\le j}e^{rt_h}|X^k_\D(t_h)|\Big) \\
  &\le& e^{-rt_j}\Big(\sup_{\o\le 0}e^{r\o}|\xi(\o)|+\sup_{0\le h\le j}e^{rt_h}|X^k_\D(t_h)|\Big) \\
  &\le& C + \sup_{0\le h\le j}e^{r(t_h-t_j)}|X^k_\D(t_h)|\le C + \sup_{0\le h\le j}|X^k_\D(t_h)|
\end{eqnarray*}
Inserting the above inequality into \eqref{M02} gives
$$
|\psi_{1j}|\le C+ \sup_{0\le h\le j}|X^k_\D(t_h)|.
$$
Consequently
\begin{equation}\label{psi1}
  |\psi_{1}(s)|^{p}\le C\Big(1+ \sup_{0\le u\le s}|\bar X^k_\D(u)|^{p}\Big).
\end{equation}
Similarly
\begin{equation}\label{psi2}
  |\psi_{2}(s)|^{p}\le C\Big(1+ \sup_{0\le u\le s}|\bar X^k_\D(u)|^{p}\Big).
\end{equation}
 Substituting these into (\ref{M01}), we obtain
$$
\E\big(\sup_{0\le u\le t} |\bar X^k_{\D}(u)|^{p}\big)
\le C +
C  \int_0^t  \E\big(\sup_{0\le u\le s} |\bar X^k_{\D}(u)|^{p}\big)\d s.
$$
An application of the Gronwall inequality implies
$$
\E\big(\sup_{0\le u\le T} |\bar X^k_{\D}(u)|^{p}\big) \le C.
$$
As $\D$ is arbitrary, we must have the desired assertion (\ref{EMB}).   $\Box$
\end{proof}

\begin{rmk}\label{rak5.9}
By virtue of Lemma \ref{TH5.8}, it follows from \eqref{psi1} and \eqref{psi2} that
$$
\sup_{t\in [0, T]}\E |\psi_i(t)|^p \le  C,\ \  i=1, 2 .
$$
\end{rmk}

\begin{lemma}\label{L5.9}
Suppose that all conditions  of Lemma \ref{TH5.8} hold. Then for any $\D\in(0, 1]$,
\begin{eqnarray*}
  \E|\bar X^k_\D(t)-X^k_\D(t)|^p &\le& C\D^{p/2},\ \forall\ t\in [0, T]
\end{eqnarray*}
\end{lemma}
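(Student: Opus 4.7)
The plan is to exploit the fact that, by construction, $\bar X^k_\D(t_j)=X^k_\D(t_j)$ at every grid point and that $X^k_\D$ is piecewise constant on each subinterval $[t_j,t_{j+1})$. Given any $t\in [0,T]$, pick the unique $j\ge 0$ with $t\in [t_j,t_{j+1})$; since $X^k_\D(t)=X^k_\D(t_j)=\bar X^k_\D(t_j)$ and $t-t_j\le \D$, the definition (\ref{CEM2}) of $\bar X^k_\D$ gives
$$
\bar X^k_\D(t)-X^k_\D(t)=\int_{t_j}^t F(X^k_\D(s),\psi_1(s),\bar\o(s),\T(s))\,\d s + \int_{t_j}^t G(X^k_\D(s),\psi_2(s),\bar\o(s),\T(s))\,\d B(s).
$$
The short length of this interval is what will ultimately generate the $\D^{p/2}$ factor.

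Next I would apply the elementary bound $|a+b|^p\le 2^{p-1}(|a|^p+|b|^p)$ to split the $p$th moment, then use H\"older's inequality on the drift integral to obtain $\E\bigl|\int_{t_j}^t F\,\d s\bigr|^p\le \D^{p-1}\int_{t_j}^t\E|F(\cdots)|^p\,\d s$, and the Burkholder-Davis-Gundy inequality on the stochastic integral to obtain $\E\bigl|\int_{t_j}^t G\,\d B(s)\bigr|^p \le C\D^{(p-2)/2}\int_{t_j}^t \E|G(\cdots)|^p\,\d s$. Assumption \ref{A5.4} (global Lipschitz together with boundedness at the origin) yields the linear-growth estimate $|F(x,y,i,s)|\vee|G(x,y,i,s)|\le C(1+|x|+|y|)$, so $\E|F(\cdots)|^p \vee \E|G(\cdots)|^p\le C\bigl(1+\E|X^k_\D(s)|^p+\E|\psi_1(s)|^p+\E|\psi_2(s)|^p\bigr)$.

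Finally I would invoke Lemma \ref{TH5.8} to bound $\E|X^k_\D(s)|^p\le C$ uniformly in $s\in[0,T]$ and $\D\in(0,1]$, and Remark \ref{rak5.9} to bound $\E|\psi_i(s)|^p\le C$ for $i=1,2$. Combining the above estimates gives $\E|\bar X^k_\D(t)-X^k_\D(t)|^p \le C\D^p + C\D^{p/2}\le C\D^{p/2}$, where the last inequality uses $\D\le 1$ and $p\ge 2$. This is precisely the claim. No serious obstacle is expected — the argument is the standard local one-step Euler-Maruyama error estimate — and the only subtlety is to verify that every constant appearing (namely $L_1$ from Assumption \ref{A5.4}, the moment bound from Lemma \ref{TH5.8}, and the bound from Remark \ref{rak5.9}) is independent of $k$, which indeed holds.
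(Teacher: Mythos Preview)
Your proof is correct and follows essentially the same approach as the paper: both fix $t\in[t_j,t_{j+1})$, express $\bar X^k_\D(t)-X^k_\D(t)$ as the sum of the drift and diffusion integrals over $[t_j,t]$, apply H\"older/BDG together with the linear-growth bound from Assumption~\ref{A5.4}, and finish with the uniform moment bounds from Lemma~\ref{TH5.8} and Remark~\ref{rak5.9}. Your treatment is slightly more explicit in separating the $\D^{p}$ and $\D^{p/2}$ contributions from the drift and diffusion, but the argument is the same.
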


\begin{proof}
Fix any $\D\in(0, 1]$.  For each $t\in[0, T]$, there exists a unique integer $j\ge 0$ such that $t_j\le t <t_{j+1}$.  Recalling the definitions of $\bar X^k_\D(\cdot)$ and $X^k_\D(\cdot)$ we derive from (\ref{CEM2})
along with Assumption \ref{A5.4} easily that
\begin{eqnarray*}
  &&\E|\bar X^k_\D(t)-X^k_\D(t)|^p \nonumber \\
  &\le& 2^{p-1}\bigg(\E|\int_{t_j}^tF(X^k_\D(s), \psi_1(s), \bar\o(s), \T(s))\d s|^p+\E|\int_{t_j}^tG(X^k_\D(s), \psi_2(s), \bar\o(s), \T(s))\d B(s)|^p\bigg) \nonumber \\
  &\le& C\D^{(p-1)/2} \int_{t_j}^t \big(1 +   \E|X^k_\D(s)|^p+  \E|\psi_{1}(s)|^p + \E|\psi_{2}(s)|^p\big) \d s.
\end{eqnarray*}
By Lemma \ref{TH5.8} and Remark \ref{rak5.9} we obtain the assertion. $\Box$
\end{proof}

\begin{lemma}\label{T5.10}
Let Assumptions \ref{A5.4}, \ref{A5.5}, \ref{A5.6} hold and $\mu_8, \mu_9\in\CP_r$. Then for any $\D\in(0, 1]$
\begin{eqnarray}\label{5.17}
  \E|x^k(t)-\bar X^k_\D(t)|^2 &\le& C\D,\ \  \forall\ t\in [0, T].
\end{eqnarray}
 \end{lemma}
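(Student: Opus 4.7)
The plan is to bound $e^k(t) := x^k(t) - \bar X^k_\D(t)$ (which vanishes on $[-k,0]$) via a Gronwall-type integral inequality for $H(t) := \E \sup_{0 \le u \le t} |e^k(u)|^2$, and deduce $H(T) \le C\D$.

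Starting from the integral representations of $x^k$ and $\bar X^k_\D$, Doob's $L^2$-inequality combined with It\^o's isometry gives
\[
H(t) \le C \int_0^t \E \bigl| F(x^k(s),\psi_1(\pi_k(x^k_s)),\o(s),s) - F(X^k_\D(s), \psi_{1}(s), \bar\o(s), \T(s)) \bigr|^2 \,\d s + (\text{$G$-analogue}).
\]
I split each integrand into a spatial/delay difference (moving the state and delay arguments), a time-discretisation difference (changing $s$ to $\T(s)$), and a Markov-chain difference (changing $\o(s)$ to $\bar\o(s)$). The spatial/delay difference is controlled by $L_1^2(|x^k(s) - X^k_\D(s)|^2 + |\psi_1(\pi_k(x^k_s))-\psi_1(s)|^2)$ via Assumption \ref{A5.4}, and Lemma \ref{L5.9} absorbs $\E|X^k_\D(s)-\bar X^k_\D(s)|^2$ into $C\D$. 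The time difference is bounded through Assumption \ref{A5.5}, yielding $C\E(1+|X^k_\D(s)|^2+|\psi_1(s)|^2)\D^{2\a} \le C\D$ since $\a \ge 1/2$, using Lemma \ref{TH5.8} and Remark \ref{rak5.9}. The Markov difference is handled via the standard estimate $\PP(\o(s) \ne \bar\o(s)) \le C\D$ for $s \in [t_j,t_{j+1})$, paired with H\"older's inequality and a higher-order moment bound from Lemma \ref{TH5.8} (taking $p$ large there) to squeeze out nearly the full power of $\D$.

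The real work is the delay discrepancy $|\psi_1(\pi_k(x^k_s)) - \psi_1(s)|^2$ (and its $\psi_2$ analogue). Writing $s \in [t_j,t_{j+1})$ and expanding both sides, I obtain
\[
\psi_1(\pi_k(x^k_s)) - \psi_{1j} = \sum_{h=-kk_1}^{-1} \int_{t_h}^{t_{h+1}} \bigl[x^k(s+u) - X^k_\D(t_{j+h})\bigr]\mu_8(\d u) + \bigl[x^k(s-k) - X^k_\D(t_{j-kk_1})\bigr]\mu_8(-\8,-k).
\]
Jensen's inequality against the probability measure $\mu_8$ reduces this to a $\mu_8$-average of $|x^k(s+u) - X^k_\D(\varphi(u,j))|^2$, where $\varphi(u,j):= t_{j+h}$ on $u \in [t_h,t_{h+1})$ and $\varphi(u,j):= t_{j-kk_1}$ for $u < -k$. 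Triangulating through $x^k(\varphi(u,j))$, I use (i) an a priori H\"older estimate $\E|x^k(s_1)-x^k(s_2)|^2 \le C|s_1-s_2|$ for $s_1, s_2 \in [0,T]$, derived from the SFDE integral form together with Lemma \ref{TH5.8}; (ii) Assumption \ref{A5.6} for indices $\le 0$, giving $C\D^{2\be} \le C\D$; and (iii) the identity $|x^k(t_{j+h}) - X^k_\D(t_{j+h})|^2 = |e^k(t_{j+h})|^2$ at positive grid-points (with the difference being zero at negative grid-points since both equal $\xi$). Using $\mu_8 \in \CP_r$ to absorb the infinite tail and interchanging the $\mu_8$-integral with the $\d s$-integral, bounding $|e^k(t_{j+h})|^2$ by $\sup_{0 \le u \le s} |e^k(u)|^2$, gives in expectation a bound of the form $C\D + C \int_0^t H(s)\,\d s$.

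Combining all three pieces produces $H(t) \le C\D + C\int_0^t H(s)\,\d s$ for $t \in [0,T]$, and Gronwall concludes $H(T) \le C\D$. I expect the principal obstacle to be the delay-discrepancy step: one must simultaneously handle the Riemann-sum approximation of the $\mu_8$-integral on $[-k,0]$, the pathwise H\"older regularity of $x^k$ inherited from its SFDE structure, the infinite tail $(-\8,-k)$ absorbed by the weighted integrability of $\mu_8$, and the self-referential term $|e^k(t_{j+h})|^2$ that only Gronwall can close.
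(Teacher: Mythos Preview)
Your overall architecture---split into spatial/delay, time-discretisation, and Markov-chain pieces, expand $\psi_1(\pi_k(x^k_s))-\psi_{1j}$ as a $\mu_8$-average, triangulate, and close with Gronwall---is exactly the paper's route. The paper works with $\E|e_\D(t)|^2$ pointwise rather than the running supremum, and in the delay term it triangulates through $\bar X^k_\D$ and $X^k_\D$ (using Lemma~\ref{L5.9}) rather than through $x^k$ at grid points, but these are cosmetic differences.

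There is one genuine gap: your treatment of the Markov-chain piece. Pairing H\"older's inequality with $\PP(\o(s)\ne\bar\o(s))\le C\D$ and a $p$-th moment bound from Lemma~\ref{TH5.8} yields at best $C_p\,\D^{1-1/p}$; you acknowledge this yourself (``nearly the full power of $\D$''). That is strictly weaker than the claimed $C\D$, and you cannot recover the loss by sending $p\to\infty$ since $C_p$ blows up. The paper instead exploits $\mathcal F_{t_j}$-measurability of $X^k_\D(t_j)$ and $\psi_{1j}$ to condition:
\[
\E\Big[\big|F(X^k_\D(t_j),\psi_{1j},\o(s),t_j)-F(X^k_\D(t_j),\psi_{1j},\o(t_j),t_j)\big|^2\,\II_{\{\o(s)\ne\o(t_j)\}}\Big]
= \E\Big[(\cdots)^2\,\E\big(\II_{\{\o(s)\ne\o(t_j)\}}\,\big|\,\mathcal F_{t_j}\big)\Big],
\]
and the inner conditional probability is $\le C\D$ uniformly, so the term is bounded by $C\D\,\E(1+|X^k_\D(t_j)|^2+|\psi_{1j}|^2)\le C\D$. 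With this single correction your argument matches the paper and delivers the stated bound.
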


\begin{proof}
 Fix  $\D\in(0, 1]$ arbitrarily.  Let $e_\D(t)=x^k(t)-\bar X^k_\D(t)$ for $t\in[0, T]$.
 It is straightforward to see that
\begin{eqnarray}\label{5.18}
 \E|e_\D(t)|^2  \le C(I(t)+J(t)),
  \end{eqnarray}
  where
  $$
  I(t) :=  \E \int_0^{t} |F(x^k(s), \psi_1(\pi_k(x^k_s)), \o(s), s)- F(X^k_\D(s), \psi_1(s), \bar\o(s), \T(s))|^2 \d s,
  $$
  $$
J(t) := \E\int_0^{t} |G(x^k(s), \psi_2(\pi_k(x^k_s)), \o(s), s)- G(X^k_\D(s), \psi_2(s), \bar\o(s), \T(s))|^2 \d s.
$$
It is also easy to see that
\begin{eqnarray}\label{eqf1}
 I(t)\le   C (I_1(t)+I_2(t)+I_3(t)),
\end{eqnarray}
where
\begin{eqnarray*}
  I_1(t) &:=& \displaystyle  \E \int_0^{t} |F(x^k(s), \psi_1(\pi_k(x^k_s)), \o(s), s)-  F(X^k_\D(s), \psi_1(s),  \o(s), s)|^2   \d s,\\
I_2(t) &:=&  \displaystyle \E \int_0^{t} |   F(X^k_\D(s), \psi_1(s),  \o(s), s)- F(X^k_\D(s), \psi_1(s), \bar\o(s), s)|^2 \d s,\\
I_3(t) &:=& \displaystyle  \E \int_0^{t} |    F(X^k_\D(s), \psi_1(s), \bar\o(s), s)- F(X^k_\D(s), \psi_1(s), \bar\o(s), \tau(s))|^2 \d s  .
\end{eqnarray*}
By Assumption \ref{A5.4} and Lemma \ref{L5.9}, we have
\begin{eqnarray}\label{I1}
   I_1(t)
  &\le & \displaystyle 2 L_1^2\E\int_0^{t} \bigg(| x^k(s)-  X^k_\D(s)|^2+ |\psi_1(\pi_k(x^k_s))-\psi_1(s)|^2 \bigg) \d s\nonumber \\
  &\le & C\D +C\displaystyle \E\int_0^{t}  |e_\D(s) |^2\d s +C\displaystyle \E\int_0^{t} |\psi_1(\pi_k(x^k_s))-\psi_1(s)|^2   \d s .
\end{eqnarray}
 For each $t>0$, let $N=\lfloor t/\D\rfloor$  and $t_{N+1}=t$ for a meanwhile.
Recalling the definition of $\pi_k$, using  the H$\ddot{\hbox{o}}$lder inequality and Assumption \ref{A5.6}, we derive that
\begin{eqnarray}
  &&\E\int_0^{t} |\psi_1(\pi_k(x^k_s))-\psi_1(s)|^2 \d s
  = \sum_{j=0}^{N} \E\int_{t_j}^{t_{j+1}} |\psi_1(\pi_k(x^k_s))-\psi_{1j}|^2 \d s  \nonumber\\
  &=&  \sum_{j=0}^{N} \E\int_{t_j}^{t_{j+1} } \big|\int_{-k}^0 x^k(s+\o)\mu_8(\d\o)+ x^k(s-k)\mu_8(-\infty, -k)  \nonumber\\
  &&-\sum_{h=-kk_1}^{-1}X^k_\D(t_{j+h})\mu_8(t_h, t_{h+1})- X^k_\D(t_j-k)\mu_8(-\infty, -k)\big|^2 \d s  \nonumber
  \end{eqnarray}
\begin{eqnarray}\label{5.21}
  &\le& \sum_{j=0}^N \E\int_{t_j}^{t_{j+1}}\bigg(\sum_{h=-kk_1}^{-1} \int_{t_h}^{t_{h+1}}  |x^k(s+\o)-X^k_\D(t_j+t_h)|  \mu_8(\d\o) \nonumber\\
  &&+ |x^k(s-k)- X^k_\D(t_j-k)|  \mu_8(-\infty, -k)\bigg)^2\d s \nonumber\\
  &\le& 2 \sum_{j=0}^N \E\int_{t_j}^{t_{j+1}}\bigg(\sum_{h=-kk_1}^{-1} \int_{t_h}^{t_{h+1}}  |x^k(s+\o)-X^k_\D(t_j+\o)|  \mu_8(\d\o)\bigg)^2\d s \nonumber\\
  &&+2 \sum_{j=0}^N \E\int_{t_j}^{t_{j+1}}\bigg( |\xi(s-k)- \xi(t_j-k)|  \mu_8(-\infty, -k)\bigg)^2\d s \nonumber\\
 &\le& 2 \sum_{j=0}^N \E\int_{t_j}^{t_{j+1}} \int_{-k}^0 |x^k (s+\o)- X^k_\D(t_j+\o)|^2 \mu_8(\d\o)\d s+ 2 TL_3^2\D^{2\beta}\nonumber\\
 &\le& 4 \sum_{j=0}^N \E\int_{t_j}^{t_{j+1}} \int_{-k}^{-s} |\xi (s+\o)- \xi(t_j+\o)|^2 \mu_8(\d\o)\d s+2 TL_3^2\D^{2\beta }\nonumber\\
 &&+4 \sum_{j=0}^N \E\int_{t_j}^{t_{j+1}} \int_{-s}^{0} |x^k (s+\o)- X^k_\D(t_j+\o)|^2 \mu_8(\d\o)\d s\nonumber\\
 &\le& C\D^{2\beta}+4   \E\int_{0}^{t } \int_{-s}^{0} |x^k (s+\o)- X^k_\D(\tau(s)+\o)|^2 \mu_8(\d\o)\d s.
\end{eqnarray}
But we obviously have
\begin{eqnarray*}
 &&  \E \int_{0}^{t } \int_{-s}^{0} |x^k (s+\o)- X^k_\D(\tau(s)+\o)|^2 \mu_8(\d\o)\d s\\
& \leq &    3\E \int_{0}^{t } \int_{-s}^{0}|e_\D(s+\o) |^2\mu_8(\d\o)\d s
+ 3 \E \int_{0}^{t } \int_{-s}^{0} |\bar X^k_\D(s+\o)-\bar X^k_\D(\tau(s)+\o)|^2  \mu_8(\d\o)\d s\\
 &&  +   3 \E \int_{0}^{t } \int_{-s}^{0}|\bar X^k_\D(\tau(s)+\o)- X^k_\D(\tau(s)+\o)|^2\mu_8(\d\o)\d s.
\end{eqnarray*}
In the same way Lemma \ref{L5.9} was proved,  we can show that $\E|\bar X^k_\D(s+\o)-\bar X^k_\D(\tau(s) +\o)|^2 \le C\D $ for any $s\in[0, T]$.  Applying this and Lemma \ref{L5.9} to the inequality above yields
\begin{eqnarray*}
 &&\displaystyle \E \int_{0}^{t } \int_{-s}^{0} |x^k (s+\o)- X^k_\D(\tau(s)+\o)|^2 \mu_8(\d\o)\d s \nonumber\\
  &\le &   C\D +\displaystyle \E \int_{0}^{t } \int_{-s}^{0}|e_\D(s+\o) |^2\mu_8(\d\o)\d s\nonumber\\
  &\le & C\D +\displaystyle \E \int_{-\infty}^{0 } \int_{0}^{t}|e_\D(s ) |^2\d s\mu_8(\d\o) \nonumber\\
  &= & C\D +\displaystyle \E  \int_{0}^{t}|e_\D(s ) |^2\d s  .
\end{eqnarray*}
 Substituting this into \eqref{5.21} yields
\begin{eqnarray*}
 \E\int_0^{t} |\psi_1(\pi_k(x^k_s))-\psi_1(s)|^2 \d s &\le& C\D + C\E\int_{0}^{t} |e_\D(s)|^2 \d s.
\end{eqnarray*}
Consequently, inserting this into \eqref{I1} we arrive  at
\begin{eqnarray}\label{eqf2}
  I_1(t) \le C\D   + C\E\int_{0}^{t} |e_\D(s)|^2\d s.
\end{eqnarray}
 By the Markov property of $\o(\cdot)$, Assumption \ref{A5.4}, ,Lemma \ref{TH5.8} and Remark \ref{rak5.9}, we derive that
\begin{eqnarray}\label{eqf3}
  I_{2}(t) &=& \sum_{j=0}^{N} \E\int_{t_j}^{t_{j+1}} |F(X^k_\D(t_j), \psi_{1j}, \o(s), t_j)-F(X^k_\D(t_j), \psi_{1j}, \o(t_j), t_j)|^2 \d s  \nonumber\\
  &=& \sum_{j=0}^N \E\int_{t_j}^{t_{j+1}} |F(X^k_\D(t_j), \psi_{1j}, \o(s), t_j)-F(X^k_\D(t_j), \psi_{1j}, \o(t_j), t_j)|^2 \II_{\{\o(s)\neq\o(t_j)\}}  \d s  \nonumber\\
  &\le& C\sum_{j=0}^N \int_{t_j}^{t_{j+1}} \E\bigg[\big(1+ |X^k_\D(t_j)|^2+ |\psi_{1j}|^2\big)\E\big(\II_{\{\o(s)\neq\o(t_j)\}}|
\mathcal{F}_{t_j}\big)\bigg]  \d s  \nonumber\\
  &\le& C\D\sum_{j=0}^N \int_{t_j}^{t_{j+1}} \E\bigg[\big(1+ |X^k_\D(t_j)|^2+ |\psi_{1j}|^2\big)\bigg]  \d s  \nonumber\\
  &\le& C\D.
\end{eqnarray}
It follows from  Assumption \ref{A5.5}, Lemma \ref{TH5.8} and Remark \ref{rak5.9} that
\begin{eqnarray}\label{eqf4}
  I_3(t) &\le&  C\E\int_0^{t} \Big(1+ |\bar X^k_\D(s)|^2+ |\psi_1(s)|^2\Big)\D^{2\a}\d s  \le  C\D^{2\a} .
\end{eqnarray}
Inserting \eqref{eqf2}-\eqref{eqf4} into \eqref{eqf1}, we obtain that $I(t)\le C\D+  C\int_0^t \E|e_\D(s)|^2\d s$.  Similarly, we can show  $J(t)\le C\D+  C\int_0^t \E|e_\D(s)|^2\d s$.
Putting these into (\ref{5.18}) gives
\begin{eqnarray*}
  \E|e_\D(t)|^2 &\le& C\D+  C\int_0^t \E|e_\D(s)|^2\d s.
\end{eqnarray*}
An application of the Gronwall inequality yields that
$$
  \E|e_\D(t)|^2  \le  C\D
$$
as required.  The proof is hence complete. $\Box$
\end{proof}

Combing Lemmas \ref{L5.9} and  \ref{T5.10},  we obtain the strong convergence of  the EM numerical solutions to the true solution of the truncated  SFDE (\ref{sfde4}).

\begin{theorem}
Let Assumptions \ref{A5.4}, \ref{A5.5}, \ref{A5.6} hold and $\mu_8, \mu_9\in\CP_r$. Then for any $\D\in(0, 1]$
\begin{eqnarray}
  \E|x^k(t)-X^k_\D(t)|^2 &\le& C\D,\  \  \forall\ t\in [0, T].
\end{eqnarray}
\end{theorem}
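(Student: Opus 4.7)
The plan is to combine the two preceding lemmas via the triangle inequality, so no new estimates are required; this is essentially a one-line argument once the pieces are in place.

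First I would introduce the continuous-time auxiliary process $\bar X^k_\D(t)$ defined in \eqref{CEM2} as an intermediary, so that I can split the error as
\begin{equation*}
|x^k(t) - X^k_\D(t)|^2 \le 2|x^k(t) - \bar X^k_\D(t)|^2 + 2|\bar X^k_\D(t) - X^k_\D(t)|^2.
\end{equation*}
Taking expectations and then applying Lemma \ref{T5.10} to the first term (which directly yields $\E|x^k(t) - \bar X^k_\D(t)|^2 \le C\D$) handles the genuine discretisation error between the true solution of the truncated SFDE and the continuous auxiliary process.

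For the second term I would invoke Lemma \ref{L5.9} with $p=2$, which gives $\E|\bar X^k_\D(t) - X^k_\D(t)|^2 \le C\D$. This term measures the gap between the continuous-time auxiliary process and the piecewise-constant numerical solution, and is controlled because on each subinterval $[t_j, t_{j+1})$ the two differ only by an integral over an interval of length at most $\D$, whose second moment is order $\D$ by the growth bounds of Lemma \ref{TH5.8} and Remark \ref{rak5.9}.

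Combining the two bounds gives $\E|x^k(t) - X^k_\D(t)|^2 \le C\D$ uniformly for $t\in[0,T]$, which is exactly the claim. The hypothesis $\mu_8,\mu_9 \in \CP_r$ is inherited directly from Lemmas \ref{L5.9} and \ref{T5.10}; no additional obstacle is anticipated, since the real work — the delicate handling of the delay kernels, the Markov switching term, and the time-regularity via Assumptions \ref{A5.5}--\ref{A5.6} — has already been absorbed into Lemma \ref{T5.10}. The only point to note is that the universal constant $C$ here depends on $p=2$, $T$, $\xi$ and the bounds on $\mu_8^{(r)}$, $\mu_9^{(r)}$, but is independent of both $\D \in (0,1]$ and $k>T$, so the assertion is uniform in $k$ as stated.
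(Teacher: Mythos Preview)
Your proposal is correct and matches the paper's approach exactly: the paper simply states that the theorem follows by combining Lemmas \ref{L5.9} and \ref{T5.10}, which is precisely the triangle-inequality splitting you describe. No further argument is needed.
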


  On the other hand,  in a similar way as  Theorem \ref{T3.5} was proved, we can show the following corollary}.

\begin{coro}
Suppose that  Assumption \ref{A5.4} holds and $\mu_8, \mu_9\in \CP_b$ with $b>r$. Then the solution $x^k(t)$ of  the truncated  SFDE (\ref{sfde4}) approximates the solution $x(t)$ of the given SFDE (\ref{sfde3}) in the sense that
\begin{equation}\label{CR}
  \E|x(t)-x^k(t)|^p\le Ce^{-(b-r) pk},\ \forall\ t\in [0, T].
\end{equation}
\end{coro}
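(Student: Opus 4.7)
The plan is to imitate the proof of Theorem~\ref{T3.5} while performing the decomposition of $\psi_i(x_s)-\psi_i(\pi_k(x^k_s))$ \emph{before} raising to the $p$-th power. This ordering converts the tail rate $(b-r)$ into the stated $(b-r)p$ while only requiring $b>r$; a direct application of Theorem~\ref{T3.5} with $\bV(x,i,t)=|x|^p$ would instead force $\mu_4\in\CP_{b_4}$ with $b_4>rp$, a strictly stronger hypothesis. Under Assumption~\ref{A5.4}, both \eqref{sfde3} and \eqref{sfde4} admit unique global solutions with finite $p$-th moments on $[0,T]$, so the argument may be localised by the stopping time $\s_h^{k}$ used in Theorem~\ref{T3.5} and then $h\to\infty$ taken at the end; this standard step I suppress below.

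Setting $e^k(t)=x(t)-x^k(t)$, I would apply the generalised It\^o formula to $|e^k(t)|^p$, invoke the global Lipschitz bound
$|F(x(s),\psi_1(x_s),\theta(s),s)-F(x^k(s),\psi_1(\pi_k(x^k_s)),\theta(s),s)|\vee|G\text{-difference}|\le L_1(|e^k(s)|+|\psi_i(x_s)-\psi_i(\pi_k(x^k_s))|)$,
and use the Young inequality $a^{p-2}b^2\le C(a^p+b^p)$ to obtain
\begin{align*}
\E|e^k(t)|^p \le C\int_0^t\E|e^k(s)|^p\,\d s + C\sum_{i=1,2}\int_0^t\E\bigl|\psi_i(x_s)-\psi_i(\pi_k(x^k_s))\bigr|^p\,\d s.
\end{align*}
The crucial estimate is of the second term. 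I decompose the $u$-domain as $(-\8,-k)\cup[-k,-s]\cup[-s,0]$ (valid because $s\le T<k$): on $[-k,-s]$ both $x(s+u)$ and $\pi_k(x^k_s)(u)$ equal $\xi(s+u)$ and cancel; on $[-s,0]$ the integrand becomes $|e^k(s+u)|$; and on $(-\8,-k)$ both arguments reduce to initial data, producing $|\xi(s+u)-\xi(s-k)|$. Using $|\xi(v)|\le\|\xi\|_r e^{-rv}$ for $v\le 0$ together with $\int_{-\8}^{-k}e^{-ru}\mu_i(\d u)\le e^{-(b-r)k}\mu_i^{(b)}$ and $\mu_i((-\8,-k])\le e^{-bk}\mu_i^{(b)}$ (both consequences of $\mu_i\in\CP_b$ and $b>r$), the tail integral is bounded by $2\|\xi\|_r\mu_i^{(b)}e^{-rs}e^{-(b-r)k}\le Ce^{-(b-r)k}$.

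Combining the remaining pieces through $(a+b)^p\le 2^{p-1}(a^p+b^p)$ and Jensen's inequality applied to the sub-probability measure $\mu_i|_{[-s,0]}$ (using $\mu_i([-s,0])\le 1$) gives
\begin{align*}
\bigl|\psi_i(x_s)-\psi_i(\pi_k(x^k_s))\bigr|^p \le 2^{p-1}\int_{-s}^0|e^k(s+u)|^p\,\mu_i(\d u)+Ce^{-(b-r)pk};
\end{align*}
the rate $(b-r)p$ arises precisely because the $p$-th power acts \emph{after} the scalar tail estimate. Substituting this back, swapping the $s$- and $u$-integrals exactly as in \eqref{T3.3h2}, and using $\mu_i([-t,0])\le 1$ to obtain $\int_0^t\int_{-s}^0\E|e^k(s+u)|^p\mu_i(\d u)\,\d s\le\int_0^t\E|e^k(v)|^p\,\d v$, I conclude $\E|e^k(t)|^p\le Ce^{-(b-r)pk}+C\int_0^t\E|e^k(s)|^p\,\d s$, whence Gronwall's inequality yields \eqref{CR}. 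The main difficulty is conceptual rather than computational: identifying that the Lyapunov framework of Theorem~\ref{T3.5} must be refined by splitting the tail of $\psi_i$ before raising to the $p$-th power, so that the rate $(b-r)p$ emerges under the weaker hypothesis $b>r$ rather than the restrictive $b>rp$.
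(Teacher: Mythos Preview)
Your argument is correct and is essentially what the paper intends by ``in a similar way as Theorem~\ref{T3.5} was proved.'' The one point worth noting is that your decisive step---bounding the scalar tail $\int_{-\infty}^{-k}|\xi(s+u)-\xi(s-k)|\,\mu_i(\d u)$ \emph{before} raising to the $p$-th power---is precisely the manoeuvre already carried out in the proof of Theorem~\ref{T3.3} (the estimate of $J$ leading to \eqref{T3.3g}), rather than in Theorem~\ref{T3.5} itself, where the power sits inside the integral from the start via Assumption~\ref{A3.4}. So your proof is really a hybrid: the It\^o/Gronwall skeleton of Theorem~\ref{T3.5} combined with the linear tail estimate of Theorem~\ref{T3.3}, specialised to the functionals $\psi_1,\psi_2$ of \eqref{psi12}. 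This hybrid is exactly what is needed, and your diagnosis that a black-box application of Theorem~\ref{T3.5} would force $b>rp$ is accurate; the structure of \eqref{sfde3} is what permits the weaker hypothesis $b>r$ with the sharper rate $e^{-(b-r)pk}$.
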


Consequently, we obtain the following strong convergence result of  the EM numerical solutions to the true solution  of the given SFDE (\ref{sfde3}).

\begin{theorem}
Suppose that Assumptions \ref{A5.4}, \ref{A5.5}, \ref{A5.6} hold and $\mu_8, \mu_9\in \CP_b$ with $b>r$. Then for any $\D \in(0, 1]$ and any integer $k > T$,
\begin{equation}
  \E|x(t)-X^k_\D(t)|^2\le C(e^{-2(b-r)  k}+ \D),\ \forall\ t\in [0, T].
\end{equation}
\end{theorem}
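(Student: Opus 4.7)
The plan is simply to combine the two preceding convergence results via the triangle inequality. Specifically, I would write
\begin{equation*}
|x(t) - X^k_\Delta(t)|^2 \le 2|x(t) - x^k(t)|^2 + 2|x^k(t) - X^k_\Delta(t)|^2,
\end{equation*}
take expectations, and control the two resulting terms separately.

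For the first term, I would invoke the corollary established immediately above, which under Assumption \ref{A5.4} and $\mu_8,\mu_9\in\CP_b$ with $b>r$ yields $\E|x(t)-x^k(t)|^p\le Ce^{-(b-r)pk}$ for the relevant range of $p$; applying this with $p=2$ gives $\E|x(t)-x^k(t)|^2\le Ce^{-2(b-r)k}$. For the second term, I would invoke the preceding theorem (the strong convergence of the modified EM scheme to the truncated SFDE $x^k$), which under Assumptions \ref{A5.4}--\ref{A5.6} and $\mu_8,\mu_9\in\CP_r$ (a weaker hypothesis than the current $\mu_8,\mu_9\in\CP_b$, $b>r$) gives $\E|x^k(t)-X^k_\Delta(t)|^2\le C\Delta$ uniformly in $t\in[0,T]$. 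Adding these bounds yields $\E|x(t)-X^k_\Delta(t)|^2\le C(e^{-2(b-r)k}+\Delta)$, with the constant $C$ depending on $p,T,\xi$ but independent of $k$ and $\Delta$, exactly as claimed.

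There is essentially no obstacle: the two ingredient estimates have already been established, and the only verification needed is that their hypotheses are jointly implied by the hypotheses of the theorem. The condition $\mu_8,\mu_9\in\CP_b$ with $b>r$ implies $\mu_8,\mu_9\in\CP_r$ (by the monotonicity $\CP_{b_1}\subset\CP_b\subset\CP_0$ noted in Section 2), so the preceding EM convergence theorem applies, while the corollary's own hypotheses coincide with those of this theorem. Hence a two-line proof via triangle inequality and the two cited results suffices.
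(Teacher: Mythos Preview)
Your proposal is correct and matches the paper's approach: the paper presents this theorem as an immediate consequence (``Consequently, we obtain\ldots'') of the preceding EM convergence theorem and the corollary, combined exactly via the triangle-inequality splitting you describe. The only minor remark is that in the paper $p\ge 2$ is a fixed parameter for the section rather than a free variable, so ``applying with $p=2$'' should be read either as taking $p=2$ from the outset or as passing from the $p$th-moment bound to the second moment via $\E|x(t)-x^k(t)|^2\le(\E|x(t)-x^k(t)|^p)^{2/p}\le C e^{-2(b-r)k}$.
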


\subsection{Highly nonlinear case}

In Section \ref{SecHN}, we already showed that the solution $x^k(t)$ of the truncated SFDE (\ref{sfde4})
approximates the solution of the given SFDE (\ref{sfde3}) under Assumptions \ref{A4.5} and \ref{A4.6}.  To obtain the numerical solution of  the truncated SFDE (\ref{sfde4}) under these assumptions, we can apply the truncated EM method (see, e.g., \cite{GMY18,ZSL18}).  Due to the page limit, we leave the details to the reader but discuss a couple of examples and carry out some simulations using MATLAB to illustrate the idea.

\begin{expl}\label{eg5.4} {\rm
In 1973, the well-known Black-Scholes model with constant volatility was presented. But tests of this model on real market data have questioned the assumption of constant volatility in the stock dynamics. For this reason, several variants of the Black-Scholes model with non-constant volatility such as stochastic functional volatility equations  have been proposed \cite{SM17}. In this example, we consider a scalar stochastic functional volatility equation with infinite delay of the form
\begin{align}\label{eq5.5}
\d x(t) = f(x_t, \o(t), t)\d t + g(x_t, \o(t), t)\d B(t),\ \ \ t\ge 0,
\end{align}
where the coefficients are defined by
$$
f(\f, i, t) = \left\{
\begin{array}{cc}
1+4\f(0)-4\f^3(0),\ \ \  i=1, \\
2+3\f(0)-5\f^3(0),\ \ \  i=2, \\
\end{array}\right.
$$
$$
g(\f, i, t) = \left\{
\begin{array}{cc}
\int_{-\infty}^0 \f^2(u) \mu(\d u),\ \ \ i=1, \\
\frac{1}{2}\int_{-\infty}^0 \f^2(u) \mu(\d u),\ \ \ i=2, \\
\end{array}\right.
$$
for $\f\in\CC_{1/5}$,    the probability measure $\mu(\cdot)$ has its probability density function $e^u$ on $(-\infty, 0]$ (i.e., $\mu(\d u)=e^u\d u$), $B(t)$ is a scalar Brownian motion and $\o(t)$ is a Markov chains on the state space $\mathbb{S}=\{1, 2\}$ with its generator
$$
\G=\left(
\begin{array}{ccc}
-1  &  1 \\
2   &  -2 \\
\end{array}\right).
$$
Let the initial data $\xi(u)=e^{u}\in\CC_{1/5}$ and $\o(0)=1$.
Recalling the definition of truncation mapping $\pi_k$ we get the corresponding approximation SFDEs
\begin{align}\label{eq5.6}
\d x^k(t) = f_k(x^k_t, \o(t), t)\d t + g_k(x^k_t, \o(t), t)\d B(t).
\end{align}
 Here $f_k$ and $g_k$ are defined by
$$
f_k(\f, i, t) = \left\{
\begin{array}{cc}
1+4\f(0)-4\f^3(0),\ \ \ i=1, \\
2+3\f(0)-5\f^3(0),\ \ \ i=2,
\end{array}\right.
$$
and
$$
g_k(\f, i, t) = \left\{
\begin{array}{cc}
e^{-k}\f^2(-k)+\int_{-k}^0\f^2(u)\mu(\d u),\ \ \ i=1, \\
\frac{1}{2}e^{-k}\f^2(-k)+\frac{1}{2}\int_{-k}^0\f^2(u)\mu(\d u),\ \ \ i=2, \\
\end{array}\right.
$$
for $\f\in\CC_{1/5}$.
One observes that $f$ and $g$ satisfy Assumption \ref{A2.1} owing to $\mu\in\mathcal{P}_\g$ for any $\g\in[0, 1)$ and  Assumption \ref{A2.2} with $W_1(x, t)=V(x, i, t)=x^4$ and $W_2(x, t)= x^6$. Thus, \eqref{eq5.5} has a unique global solution.  Furthermore, Assumption \ref{A3.4} holds with $\bar q=2$, $\bar V(x,t)=x^2$, $\mu_4=\de_0\in\mathcal{P}_\g$ for any $\g>0$, $ \mu_5=\mu$ and $U(x,y)=|x-y|^2|x+y|^2\in\mathcal{U}_{0,4}$. By virtue of Theorem \ref{T3.5}, $x^k(T)$ converges to $x(T)$  exponentially  for any $T>0$.

In order to test the efficiency of the result in Theorem \ref{T3.5} we carry out  some numerical experiments by MATLAB.
We use the truncated EM numerical solution of \eqref{eq5.6} with $k=200$ and $\D=2^{-6}$ as the exact solution of \eqref{eq5.5} and plot the mean square error $\E|x(10)-x^k(10)|^2$ for 1000 sample points between the solution of \eqref{eq5.5} and that of \eqref{eq5.6} as function of $k$ when $k\in\{10,11,12,13,14\}$.
 Furthermore, in order to characterize the error between the exact solution and numerical solution with respect to $\D$, we take the truncated EM numerical solution of \eqref{eq5.6} with $k=200$ and $\D=2^{-18}$ as the exact solution of \eqref{eq5.5}.  Figure 1 depicts  the root mean square error $(\E|x(10)-X^k_\D(10)|^2)^{1/2}$ between the exact solution and the numerical solution of \eqref{eq5.5} with $k=50$, as a function of $\D\in \{2^{-6}, 2^{-8}, 2^{-10}, 2^{-12}, 2^{-14}\}$ for 1000 sample points.

\begin{figure}[h]\label{fig1}
  \centering
  \includegraphics[width=15cm]{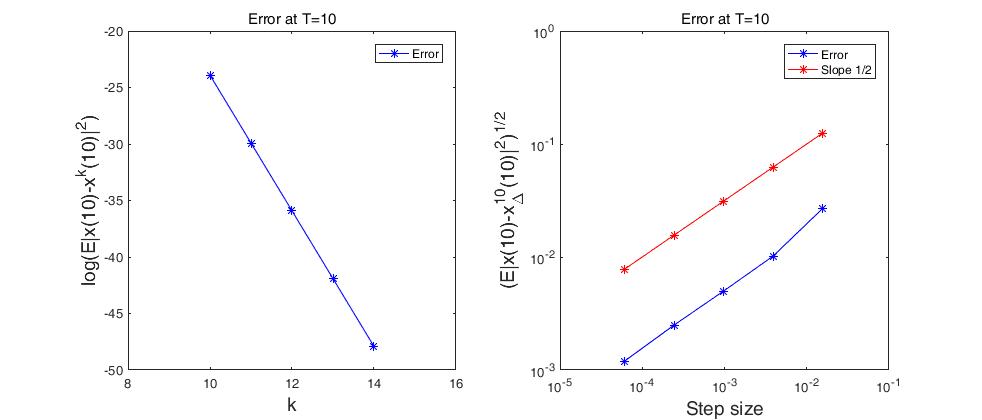}\\
  \caption{(a) The mean square error for 1000 sample points between  $x(10)$  and $x^k(10)$ as the function  of  $k\in\{10,11,12,13,14\}$.  (b) The root mean square errors for 1000 sample points between $x(10)$ and $X^{50}_\D(10)$ as the function of  $\D \in\{2^{-6}, 2^{-8}, 2^{-10}, 2^{-12}, 2^{-14}\}$.  }
\end{figure}
}
\end{expl}

\begin{expl}\label{eg5.5}{\rm
The delay Lotka-Volterra systems  have received great attention owing to their extensive applications. The author of \cite{Liu15} analyzed the global asymptotic stability of the generic  stochastic Lokta-Volterra systems with infinite delay. Let us consider such a system with special coefficients  described by
\begin{equation}\label{eq5.7}
\left\{
\begin{aligned}
\d x_1(t) = &x_1(t)[(0.5+0.1\sin t)-0.8x_1(t)-0.2x_2(t)]\d t +
            0.5x_1(t)\d B_1(t), \\
\d x_2(t) = &x_2(t)\big[(0.3+0.2\sin 2t)-0.6x_2(t) \\ &-0.12\int_{-\infty}^0 x_1(t+u)\mu(\d u))\big]\d t + 0.5x_2(t)\d B_2(t),\\
\end{aligned}\right.
\end{equation}
and the initial data are given by $\xi_1(u)=0.8e^u$ for $u\le 0$ and $x_2(0)=0.6$.  Here $(B_1(t), B_2(t))$ is a 2-dimensional Brownian motion and $\mu$ is the same probability measure  as  in Example \ref{eg5.4}. According to \cite{Liu15}, \eqref{eq5.7} has a unique global positive solution $x(t)=(x_1(t), x_2(t))$ for $t\geq 0$.  One observes that  Assumption  \ref{A3.2} holds with $\beta(x)=W_1(x, t)= V(x, i, t)=|x|^4, W_2(x, t)=0$ and $\mu_1=\delta_0$. Assumption \ref{A3.3} holds with $\mu_3(\cdot) =0.5(\delta_0(\cdot) + \mu(\cdot))$. For any $k>0$,   the corresponding approximation equation is
\begin{equation}\label{eq5.8}
\left\{
\begin{aligned}
\d x^k_1(t) = &x^k_1(t)[(0.5+0.1\sin t)-0.8x^k_1(t)-0.2x^k_2(t)]\d t +
            0.5x^k_1(t)\d B_1(t), \\
\d x^k_2(t) = &x^k_2(t)\big[(0.3+0.2\sin2t)-0.6x^k_2(t)-0.12e^{-k}x_1(t-k)  \\
        &-0.12\int_{-k}^0 x^k_1(t+u)\mu(\d u))\big]\d t + 0.5x^k_2(t)\d B_2(t),
\end{aligned}
\right.
\end{equation}
 which has a unique global solution $x^k(t)=(x^k_1(t), x^k_2(t))$ for $t\geq 0$.  
 Thus, by virtue of  Theorem \ref{T3.3}, $x^k(t)$ converges to the solution $x(t)$ in $L^2$. For illustration, we carry out some numerical experiments using MATLAB.  Due to the unsolvability of \eqref{eq5.8} we  regard  the numerical solution of the truncated EM scheme with $\D=2^{-6}$ and $k=200$ as the exact $x(t)$ of \eqref{eq5.7}, while 
for $k\in\{10,12,14,16,18\}$, we regard the numerical solution of the  truncated EM scheme with $\D=2^{-6}$ as the exact  $x^k(t)$ of \eqref{eq5.8}.
Furthermore, we view the truncated EM numerical solution of \eqref{eq5.8} with $k=200$ and $\D=2^{-18}$ as the exact solution of \eqref{eq5.7}. Let $k=30$ and $\D\in \{2^{-6}, 2^{-8}, 2^{-10}, 2^{-12}, 2^{-14}\}$. Figure 2 depicts the root mean square error $(\E|x(10)-X^k_\D(10)|^2)^{1/2}$ between the exact solution and the numerical solution of \eqref{eq5.7} for 1000 sample points, as functions of $\D$ for 1000 sample points.

\begin{figure}
  \centering
  \includegraphics[width=15cm]{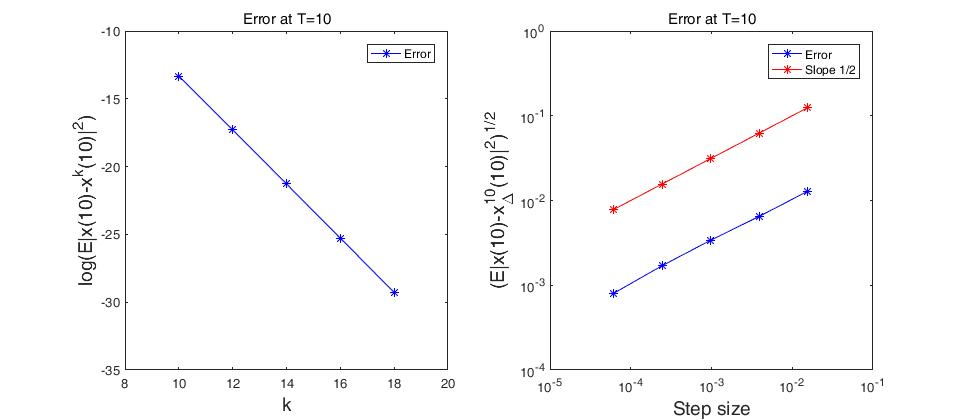}\\
  \caption{(a) The mean square error for 1000 sample points between   $x(10)$  and $x^k(10)$ as the function  of  $k\in\{10,12,14,16,18\}$.  (b) The root mean square error for 1000 sample points between   $x(10)$  and $X_\Delta^{30}(10)$ as the function  of  $\Delta\in\{2^{-6}, 2^{-8}, 2^{-10}, 2^{-12}, 2^{-14}\}$. }
\end{figure}
}
\end{expl}

\section*{Acknowledgements}

The authors would like to thank the National Natural Science Foundation of China (No. 11971096), the National Key R$\&$D Program of China (2020YFA0714102), the Natural Science Foundation of Jilin Province (No. YDZJ202101ZYTS154),    the Fundamental Research Funds for the Central Universities,
the Royal Society (WM160014, Royal Society Wolfson Research Merit Award),
the Royal Society of Edinburgh (RSE1832)
 for their financial support.

\bigskip
\noindent

\end{document}